\documentclass[11pt]{amsart}
\usepackage[margin=1in]{geometry}

\usepackage{amssymb}
\usepackage{amsthm}
\usepackage{amsmath}
\usepackage{mathrsfs}
\usepackage{amsbsy}
\usepackage{bm}
\usepackage{hyperref}
\usepackage{tikz}
\usetikzlibrary{calc}
\usepackage{array}
\usepackage{enumerate}
\usepackage{xcolor}
\usepackage{bbm}
\usepackage{comment}
\usepackage{mathtools}
\usepackage{microtype}

\definecolor{DangerousAxiomBlue}{RGB}{46,56,255}
\definecolor{DarkAxiomBlue}{RGB}{107,131,255}
\definecolor{AxiomBlue}{RGB}{153,187,255}
\definecolor{SafetyAxiomBlue}{RGB}{194,214,255}
\definecolor{DarkBlue}{RGB}{0,0,230}

\hypersetup{colorlinks=true, citecolor=AxiomBlue, linkcolor=DangerousAxiomBlue,urlcolor=DangerousAxiomBlue}

\allowdisplaybreaks
\usepackage[noadjust]{cite}
\usepackage{caption}
\usepackage[noabbrev,capitalise,nameinlink]{cleveref}
\crefname{conjecture}{Conjecture}{Conjectures}

\newtheorem{theorem}{Theorem}[section]
\newtheorem{proposition}[theorem]{Proposition}

\newtheorem{question}[theorem]{Question}

\newtheorem{lemma}[theorem]{Lemma}

\newtheorem{maintheorem}{Theorem}

\crefname{maintheorem}{Theorem}{Theorems}

\theoremstyle{definition}

\newtheorem{remark}[theorem]{Remark}

\newcommand{\SSS}{\mathfrak{S}}
\newcommand{\ZZ}{\mathbb{Z}}
\newcommand{\RR}{\mathbb{R}}
\newcommand{\E}{\mathbb{E}}
\newcommand{\Prob}{\mathbb{P}}
\newcommand{\1}{\mathbf{1}}
\DeclareMathOperator{\Ent}{Ent}
\DeclareMathOperator{\Var}{Var}
\DeclareMathOperator{\id}{id}
\numberwithin{equation}{section}

\newcommand{\dfn}[1]{\textcolor{DarkAxiomBlue}{\emph{#1}}}

\begin{document}

\title{Cutoff for the Adjacent Transposition Shuffle on a Cycle}

\author[]{Colin Defant}
\address[]{Axiom Math, 124 University Avenue, Palo Alto, CA 94301, USA}
\email{colin@axiommath.ai} 

\begin{abstract}
In the \emph{adjacent transposition shuffle on a cycle}, $n$ distinct cards are placed at the vertices of a cycle, and adjacent cards swap positions according to independent Poisson clocks of rate 1. We prove that this Markov chain exhibits total variation cutoff at time $n^2\log n/(8\pi^2)$, with a window of order at most $n^2\log\log n$.    
\end{abstract}

\maketitle

\section{Introduction}\label{sec:intro}
\subsection{Shuffling}
Card shuffling provides some of the most concrete examples of finite-state Markov chains. Given a procedure for repeatedly modifying a deck of cards, one can ask
how many steps are required before the resulting ordering is nearly
indistinguishable from a uniformly random permutation. Questions of this kind
have played an important role in the development of the theory of mixing times
for Markov chains. The answer depends dramatically on what moves are allowed. For example, the
classical Gilbert--Shannon--Reeds model of riffle shuffling, analyzed
by Bayer and Diaconis \cite{BayerDiaconis}, mixes a deck of $n$ cards after
roughly $\frac32\log_2 n$ shuffles. Another fundamental model allows two
arbitrary cards to exchange positions; Diaconis and Shahshahani
\cite{DS} showed that the resulting random-transposition
shuffle exhibits an abrupt transition to equilibrium on the scale
$\frac12 n\log n$. These examples helped establish card shuffling as a
particularly useful testing ground for ideas concerning convergence to
equilibrium and the cutoff phenomenon.

A rather different situation arises when the dynamics are constrained by
geometry. Suppose that cards can exchange positions only when they are
adjacent in some graph. Information can then propagate only locally, so mixing is governed by diffusion rather than by the essentially mean-field behavior of random transpositions. This simple restriction connects card shuffling with models from interacting particle systems and statistical mechanics. Indeed, if one forgets some or all of the card labels, the same graphical dynamics gives variants of the symmetric exclusion process. 

The most extensively studied example of this type is the \dfn{adjacent
transposition shuffle on a path}. Aldous \cite{Aldous} obtained the first
general bounds on its mixing time. Wilson \cite{Wilson} later determined the
correct order of the mixing time and obtained sharp bounds on its leading
constant. Lacoin \cite{LacoinSegment} subsequently proved that it exhibits cutoff at time $n^2\log n/(2\pi^2)$. Closely related questions for the symmetric
exclusion process have also been studied on the cycle; in particular, Lacoin
\cite{LacoinCircle} proved cutoff with a diffusive window for exclusion on the
cycle.

In this article, we consider the corresponding shuffle of \emph{labeled} cards on a cycle. At first glance, replacing a path by a cycle may appear to be a minor modification. However, this modification breaks the monotonicity that was crucial for the analysis of the path. Overcoming this obstacle has been a serious challenge. Our main result does this, showing that the adjacent transposition shuffle on a cycle exhibits cutoff and
determining the cutoff time.

\subsection{Mixing Times and Cutoff}
The \dfn{total variation distance} between two probability measures $\alpha,\beta$ on a finite set $\Omega$ is 
\[\|\alpha-\beta\|_{\mathrm{TV}}=\frac12\sum_{z\in\Omega}|\alpha(z)-\beta(z)|=\max_{A\subseteq\Omega}|\alpha(A)-\beta(A)|. \]
Suppose $\mathcal X=(X_t)_{t\geq 0}$ is an irreducible continuous-time Markov chain with state space $\Omega$ whose stationary distribution is $\pi$. We write $P_t^{\mathcal X}(\sigma,\sigma')=\mathbb P(X_t=\sigma'\mid X_0=\sigma)$ for the probability the chain is in state $\sigma'$ given it started at state $\sigma$. We view $P_t^{\mathcal X}(\sigma,\cdot)$ as a probability distribution on $\Omega$ and let \[d_n^{\mathcal X}(t)=\max_{\sigma\in\Omega}\|P_t^{\mathcal X}(\sigma,\cdot)-\pi\|_{\mathrm{TV}}.\] For $0<\epsilon<1$, the \dfn{mixing time} of $\mathcal X$ is the quantity \[t_{\mathrm{mix}}^{\mathcal X}(\epsilon)=\inf\{t\geq 0:d_n^{\mathcal X}(t)\leq \epsilon\}.\] 

One of the central themes of the modern theory of Markov chains is the cutoff phenomenon, which concerns a sequence $(\mathcal X^{(N)})_{N\geq 1}$ of finite-state Markov chains \cite{AldousDiaconis, DiaconisCutoff, LevinPeres}. We say this sequence exhibits \dfn{cutoff} if there is a sequence $(t_N)_{N\geq 1}$ such that 
\[\lim_{N\to\infty}\frac{t_{\mathrm{mix}}^{\mathcal{X}^{(N)}}(\epsilon)}{t_N}=1\]
for every fixed $\epsilon\in(0,1)$. In this situation, it is common to simply say that $\mathcal X^{(N)}$ exhibits cutoff at time $t_N$. 

\subsection{Setup and Main Result}

Let us view the symmetric group $\mathfrak S_n$ as the group of bijections from $\ZZ/n\ZZ$ to itself. Let $G=(\mathbb Z/n\mathbb Z,E)$ be a simple graph. A permutation $\sigma\in \mathfrak S_n$ can be seen as an arrangement of cards on the vertices of $G$ where $\sigma(i)$ is the position of card $i$. The \dfn{interchange process} on $G$ is a continuous-time Markov chain with state space $\mathfrak S_n$. The edges of $G$ are assigned independent Poisson clocks of rate $1$; when the clock assigned to an edge $e$ rings, the cards on the endpoints of $e$ swap places. (More generally, an interchange process may assign clocks of different rates to the edges, but we assume here that all clocks have rate $1$.)  

Throughout this article, we fix $n\geq 3$ and let $\mathcal{C}_n=(\ZZ/n\ZZ,E(\mathcal C_n))$ be the cycle graph with edge set $E(\mathcal{C}_n)=\{\{x,x+1\}:x\in \ZZ/n\ZZ\}$. For $x\in \ZZ/n\ZZ$, let $\tau_x$ be the transposition that swaps $x$ and $x+1$. The \dfn{adjacent transposition shuffle on the cycle} is the interchange process on $\mathcal C_n$. Equivalently, it is the continuous-time Markov chain with generator
\begin{equation}\label{eq:shuffle-generator}
\mathcal L_n f(\sigma)=\sum_{x\in \ZZ/n\ZZ}\bigl(f(\tau_x\circ\sigma)-f(\sigma)\bigr).
\end{equation}
This chain is irreducible, and it is reversible with respect to the uniform probability measure $\pi_n$ on $\SSS_n$. Write $P_t(\sigma,\cdot)$ for its distribution at time $t$ when it starts at $\sigma$, and let $\mu_t=P_t(\id,\cdot)$, where $\id$ is the identity permutation. Let
\[d_n(t)=\max_{\sigma\in\SSS_n}\|P_t(\sigma,\cdot)-\pi_n\|_{\mathrm{TV}}=\|\mu_t-\pi_n\|_{\mathrm{TV}},\] where the second equality comes from the symmetry arising from the action of $\mathfrak S_n$. Let
\[t_{\mathrm{mix}}^{(n)}(\varepsilon)=\inf\{t\geq 0:d_n(t)\leq\varepsilon\}. \] 

To state our main theorem, we introduce the notation 
\begin{equation}\label{eq:cutoff-time}
\lambda_n=2-2\cos(2\pi/n)\quad\text{and}\quad t_n=\frac{\log n}{2\lambda_n}.
\end{equation}

\begin{maintheorem}\label{thm:main}
There is an absolute constant $C>0$ such that, for every $\varepsilon\in(0,1)$, there is a constant $C_\varepsilon>0$ satisfying
\[
t_n-C_\varepsilon n^2\leq t_{\mathrm{mix}}^{(n)}(\varepsilon)\leq t_n+Cn^2\log\log n+C_\varepsilon n^2
\]
for all sufficiently large $n$. In particular, $t_{\mathrm{mix}}^{(n)}(\varepsilon)\sim n^2\log n/(8\pi^2)$.
\end{maintheorem}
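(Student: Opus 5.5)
We prove the two inequalities of \cref{thm:main} separately: a Wilson-type eigenfunction argument gives the lower bound, and a two-stage comparison argument gives the upper bound.

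\emph{Lower bound.} Each single card performs a continuous-time simple random walk on $\mathcal C_n$ with jump rate $1$ in each direction. Hence $\phi(\sigma)=\sum_{i}\cos\bigl(2\pi(\sigma(i)-i)/n\bigr)$ satisfies $\mathcal L_n\phi=-\lambda_n\phi$. This holds because $\sum_x (f(\tau_x\circ\sigma)-f(\sigma))$ acts on the position of card $i$ as the discrete Laplacian: a card at position $y$ moves to $y\pm1$ at rate $1$ each, and the term $\cos(2\pi(y-i)/n)$ picks up eigenvalue $-(2-2\cos(2\pi/n))$. Consequently $\E_{\id}\phi(X_t)=ne^{-\lambda_n t}$, while $\E_{\pi_n}\phi=0$ and $\Var_{\pi_n}\phi=O(1)$ (the values $\sigma(i)-i$ are nearly independent under $\pi_n$). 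For the variance under $\mu_t$ we follow Wilson: compute $\mathcal L_n(\phi^2)$ and use the local square-increment bound $\sum_x(\phi(\tau_x\sigma)-\phi(\sigma))^2=O(n\cdot n^{-2})$. This yields $\Var_{\mu_t}\phi\le O(1)+O(n\lambda_n^{-1}n^{-2})=O(1)$. At $t=t_n-Cn^2$ the mean is $ne^{-\lambda_n t}=\sqrt n\,e^{C\lambda_n n^2}\gg 1$, so Chebyshev applied to the set $\{\phi>\tfrac12\E_{\mu_t}\phi\}$ gives $d_n(t)\ge 1-\varepsilon$. This step is routine.

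\emph{Upper bound.} The plan has three steps.
\begin{enumerate}[(i)]
\item Using \emph{only} positional information, bring the chain to a state in which each card's displacement has Fourier mode amplitude $O(n^{-1/2}\text{polylog})$. Concretely, run until time $t_n$, at which $e^{-\lambda_n t}\approx n^{-1/2}$. By then every card's position is nearly uniform: its TV distance from uniform is about $e^{-\lambda_n t}$, which is small, and jointly the first-mode statistics are at the CLT level.
\item Show that the remaining $Cn^2\log\log n$ time removes the residual bias. Project onto exclusion-type configurations, namely the colorings obtained by forgetting labels outside a set $A$ of $k$ cards. By the chameleon/Lacoin approach for exclusion on the cycle \cite{LacoinCircle}, each such projection mixes at time $t_n+O(n^2)$ from a deterministic start.
\item Upgrade from projections to the full labeled permutation. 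For this we use a recursive, multiscale decomposition: split the cards into dyadic color classes, and mix the relative order within each class by applying the exclusion result to the class $\{$first half$\}$ versus $\{$second half$\}$ conditionally. There are $\log_2 n$ levels. Each level costs a failure probability that must be made $O(1/\log n)$, and reaching that costs an extra $n^2\log\log n$ at the level of the relaxation scale $\lambda_n^{-1}\asymp n^2$. This is exactly where the window $n^2\log\log n$ comes from.
\end{enumerate}

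The main obstacle is step (iii): lack of monotonicity on the cycle prevents Lacoin's censoring/coupling of the path case. To replace it, we would first try the $L^2$ route for the conditional chains, using a spectral gap of order $n^{-2}$ together with a Wilson-type coupling to control the initial $L^2$ distance of each conditional projection after the burn-in. If that control fails, we would instead fall back on a Diaconis--Saloff-Coste comparison of the projected chains with exclusion processes on the cycle.
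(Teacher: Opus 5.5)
The upper bound has a genuine gap, and it sits in step (iii), which is where the whole difficulty of the problem lies. Lacoin's theorem controls the \emph{marginal} law of each two-color projection. Marginal mixing of all such projections does not imply mixing of the joint law. Your dyadic recursion needs, at every level, the \emph{conditional} law of the finer coloring given the coarser one, and that is not the law of an exclusion process on $\mathcal C_n$ from a deterministic start. Conditional on the coarse trajectory, the two halves of a class exchange only across edges internal to the class's moving occupied set, and are otherwise carried along as that set moves. This is exactly the red/black structure of \cref{subsec:revealed}: a time-inhomogeneous, history-dependent dynamics. Conditioning only on the terminal coarse coloring further averages over these histories. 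Put differently, the level-$\ell$ projection is a $2^\ell$-species exclusion process, which Lacoin's result does not cover for $\ell\geq2$, and the last level is the labeled shuffle itself. So each step of the recursion is a problem of the same type as the original.

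Your fallbacks do not close this gap. A spectral-gap or comparison argument run from the deterministic start loses the constant. An $L^2$ bound with $\chi^2$-distance $n!-1$ and gap $\lambda_n$ needs time of order $n^3\log n$, and the log-Sobolev inequality of \cref{lem:lsi} started from entropy $\log n!$ gives only $O(n^2\log n)$ with a non-sharp constant. Running such an argument after a burn-in instead requires a polylogarithmic bound on the distance to equilibrium shortly after $t_n$. That bound is the heart of the theorem, and the unspecified ``Wilson-type coupling'' supplies no mechanism for it; monotone couplings are exactly what the cycle lacks. Step (i) says nothing about the joint law. Attributing the $n^2\log\log n$ window to a union bound over $\log_2 n$ levels is not backed by any estimate.

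The missing idea is the paper's. Expose the cards in a uniformly random order and use the entropy chain rule, comparing the conditional law of each newly exposed card with the renormalized one-card kernel (\cref{prop:entropy-error}). This reduces $H(\mu_t\mid\pi_n)$ to $nh(t)+O(\log n)+2\sum_m m\,\mathcal E_{n,m}(t)$, where $\mathcal E_{n,m}$ is the mean-square error in predicting a red card's position from the black trajectories. That error is written exactly through the coincidence probability of the two-copy process and integrated in time via the inverse-generator norm and an explicit corrector in the separation $X-Y$. The remainder $\mathscr R_{n,m}$ is bounded by a multiscale martingale decomposition using the uniform local Poincar\'e inequality. The result is $H(\mu_{t_n+\lambda_n^{-1}}\mid\pi_n)=O((\log n)^3)$. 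The log-Sobolev decay \eqref{eq:entropy-decay} then removes this polylogarithmic entropy in further time $O(n^2\log\log n)$, which is the true source of the window.

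Your lower bound is the paper's Wilson argument, but the scales need fixing. In fact $\Var_{\pi_n}(F)=n^2/(2(n-1))\approx n/2$ and $\Var_{\mu_t}(F)\leq 8\pi^2/(n\lambda_n)=O(n)$; your own expression $n\lambda_n^{-1}n^{-2}$ is of order $n$, not $1$. So the mean $\sqrt n\,e^{C\lambda_n n^2}$ must be compared with $\sqrt n$, not with $1$. With $O(1)$ variances the same reasoning would yield a lower bound near $2t_n$, contradicting the upper bound.
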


\cref{thm:main} tells us that the adjacent transposition shuffle on the cycle $\mathcal C_n$ exhibits cutoff with the cutoff time $n^2\log n/(8\pi^2)$.

\subsection{Related Work}
Starting with the interchange process on a graph $G$, one can obtain a simpler process by choosing some $1\leq k \leq n-1$ and keeping track of only the vertices occupied by the cards $1,\ldots,k$. The resulting \dfn{simple exclusion process} exchanges occupied and unoccupied neighboring vertices at rate 1. Morris \cite{Morris} determined the correct order of the mixing time on discrete tori using an auxiliary color-changing process called the \emph{chameleon process}. Oliveira \cite{Oliveira} developed this approach further and bounded mixing times of simple exclusion processes on arbitrary weighted graphs in terms of the mixing times of the associated 1-particle walks.

For the simple exclusion process on the cycle $\mathcal C_n$, Lacoin \cite{LacoinCircle} identified the cutoff time $n^2\log k/(8\pi^2)$ when $k\leq n/2$ and $k\to\infty$, and he proved that the cutoff window has order $n^2$. In a companion article, he identified the \emph{cutoff profile}, which provides even more refined information than the cutoff phenomenon alone \cite{LacoinProfile}. However, Lacoin explicitly noted that his cycle argument did not prove cutoff for the adjacent transposition shuffle on a cycle \cite[Section~1.4]{LacoinCircle}.

The relationship between the 1-particle simple exclusion process and the full interchange process is especially striking at the level of spectral gaps. Caputo, Liggett, and Richthammer \cite{CLR} proved Aldous' spectral gap conjecture, which states that on every finite connected weighted graph, the interchange process and the corresponding single-particle random walk have the same spectral gap. A spectral gap does not by itself give the leading constant in total variation mixing. One must also understand the size of the initial deviation and its equilibrium fluctuations. For the adjacent transposition shuffle on $\mathcal C_n$, we consider the statistic $F\colon\SSS_n\to\mathbb R$ defined by
\[F(\sigma)=\sum_{i\in \ZZ/n\ZZ}\cos\bigl(2\pi(\sigma(i)-i)/n\bigr).\]
It starts at $n$, its expectation at time $t$ is $ne^{-\lambda_n t}$, and its equilibrium standard deviation has order $\sqrt n$ (see \cref{sec:finish}). Equating the two scales suggests that $ne^{-\lambda_n t}=\sqrt n$, which gives exactly $t=t_n$. The eigenfunction method of Wilson \cite{Wilson} makes this intuition into a lower bound. The upper bound must rule out additional information in the joint configuration that survives substantially longer than this statistic.

This perspective also explains the factor of 4 between the cutoff times for the adjacent transposition shuffles on the path and on the cycle. The lowest positive eigenvalues of the two 1-card walks are asymptotically $\pi^2/n^2$ and $4\pi^2/n^2$, respectively. Comparing 1-particle and many-particle mixing is a useful idea in other contexts as well. The comparison method of Diaconis and Saloff-Coste \cite{DSC} transfers estimates between generators by comparing their Dirichlet forms. Alon and Kozma \cite{AlonKozma} developed operator comparisons using the octopus inequality, while Hermon and Salez \cite{HermonSalez} obtained sharp bounds on mixing times for high-dimensional products, including the hypercube. 

\begin{remark}
There is also a different model called the \emph{cyclic adjacent transposition shuffle}. Nam and Nestoridi \cite{NamNestoridi} proved cutoff for this systematic-scan shuffle, which updates successive edges of a \emph{path} rather than choosing independent random edges. The word ``cyclic'' in that model's name refers to the update schedule, not to a geometric cycle. 
\end{remark}

\subsection{Proof Strategy and Outline}

Our lower bound on $t_{\mathrm{mix}}^{(n)}(\epsilon)$ uses the slowly decaying statistic $F$ discussed above. The main work is to prove the matching upper bound. Our approach is to reveal the card positions in a uniformly random order and estimate, at each stage, how much information remains about the position of the next card. We obtain sufficiently strong control by averaging these errors over time and space. This leads to a polylogarithmic bound on the amount of information remaining shortly after $t_n$, after which a standard functional inequality brings the chain close to equilibrium in an additional $O(n^2\log\log n)$ time.

\Cref{sec:prelim} collects the preliminary estimates. In \cref{sec:exposure,sec:replica}, we develop the argument in which we randomly expose cards, and we study an auxiliary process used to control the resulting errors. \Cref{sec:local-gap,sec:blocks} establish the main estimates needed for this argument. We complete the proof of \cref{thm:main} in \cref{sec:finish} and discuss further questions in \cref{sec:future}.

\section{Entropy and Heat-Kernel Estimates}\label{sec:prelim}

Let $\nu$ be a probability measure on a finite set $\Omega$. For a function $g\colon\Omega\to\mathbb R_{\geq 0}$, define
\[\Ent_\nu(g)=\E_\nu[g\log g]-\E_\nu[g]\log\E_\nu[g], \]
where $0\log0=0$. If $\alpha$ is another probability measure on $\Omega$, its \dfn{relative entropy} with respect to $\nu$ is
\[H(\alpha\mid\nu)=\sum_{z\in\Omega}\alpha(z)\log\frac{\alpha(z)}{\nu(z)}. \]
If $\alpha$ assigns positive mass to a point to which $\nu$ assigns mass $0$, then $H(\alpha\mid\nu)=\infty$. Otherwise,
\[H(\alpha\mid\nu)=\Ent_\nu\left(\frac{\mathrm{d}\alpha}{\mathrm{d}\nu}\right). \]
We will use Pinsker's inequality
\[\|\alpha-\nu\|_{\mathrm{TV}}^2\leq\frac12H(\alpha\mid\nu). \]

Now let $L$ be the generator of a continuous-time Markov chain on a finite set $\Omega$. Write $q(z,z')$ for the transition rate from $z$ to $z'$ so that
\[Lf(z)=\sum_{z'\in\Omega}q(z,z')\bigl(f(z')-f(z)\bigr). \]
Suppose the chain is reversible with respect to a probability measure $\nu$. Its \dfn{Dirichlet form} is
\[
\mathcal D(f)=\langle f,-Lf\rangle_\nu
=
\frac12\sum_{z,z'\in\Omega}\nu(z)q(z,z')
\bigl(f(z')-f(z)\bigr)^2,
\]
where $\langle f,g\rangle_\nu=\E_\nu[fg]$. We write
\[
\Var_\nu(f)=\E_\nu\left[\left(f-\E_\nu[f]\right)^2\right].
\]
A \dfn{Poincar\'e inequality} with constant $A$ is an inequality
\[
\Var_\nu(f)\leq A\mathcal D(f)
\]
that holds for every real-valued function $f$ on $\Omega$. A \dfn{logarithmic Sobolev inequality} with constant $A$ is an inequality
\[
\Ent_\nu(f^2)\leq A\mathcal D(f)
\]
that holds for every real-valued function $f$ on $\Omega$.

\subsection{A Logarithmic Sobolev Inequality}

For an integer $N\geq 1$, let $\mathcal P_N$ be the path with vertex set $[N]:=\{1,\ldots,N\}$ and edge set
\[E(\mathcal P_N)=\{\{j,j+1\}:1\leq j\leq N-1\}. \]
For $1\leq i<j\leq N$, let $\tau_{i,j}^{(N)}$ denote the transposition in $\SSS_N$ that swaps the vertices $i$ and $j$, and write $\tau_j^{(N)}=\tau_{j,j+1}^{(N)}$. The \dfn{adjacent transposition shuffle on a path} is the continuous-time Markov chain on $\SSS_N$ with generator
\[L_N^{\mathrm{path}}f(\sigma)=\sum_{j=1}^{N-1}\bigl(f(\tau_j^{(N)}\circ\sigma)-f(\sigma)\bigr). \]
Equivalently, each edge of $\mathcal P_N$ has an independent Poisson clock of rate 1, and the cards at its endpoints exchange positions whenever that clock rings. This chain is reversible with respect to the uniform probability measure $\pi_N$ on $\SSS_N$, so $\pi_N$ is stationary for this chain. Let
\[\mathcal D_N^{\mathrm{path}}(f)=\langle f,-L_N^{\mathrm{path}}f\rangle_{\pi_N} \]
be its Dirichlet form. 

\begin{lemma}\label[lemma]{lem:lsi}
There is an absolute constant $c_1>0$ such that, for every $N\geq1$ and every function $f\colon\SSS_N\to\mathbb R$,
\[
\Ent_{\pi_N}(f^2)\leq c_1N^2\mathcal D_N^{\mathrm{path}}(f).
\]
\end{lemma}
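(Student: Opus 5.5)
This is the known result (Yau; Lee--Yau) that the log-Sobolev constant of the interchange process on a path of length $N$ is of order $N^2$. The cleanest route I would take is induction on $N$ via martingale decomposition of entropy (the Lu--Yau / Lee--Yau method), conditioning on the position of one card.

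\emph{Approach.} I would prove the bound by induction on $N$, with the inductive step organized as follows.

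First, decompose the entropy by conditioning on the position $\sigma(N)$ of card $N$, or more symmetrically on the block structure. By the chain rule,
\[
\Ent_{\pi_N}(f^2)=\E_{\pi_N}\bigl[\Ent_{\pi_N}(f^2\mid \sigma(N))\bigr]+\Ent_{\pi_N}\bigl(\E[f^2\mid\sigma(N)]\bigr).
\]
Given $\sigma(N)=x$, the remaining cards are uniformly arranged on $[N]\setminus\{x\}$. That set is a path with one vertex removed. Contracting the gap, the conditional law is the uniform measure on $\SSS_{N-1}$.

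Second, bound the conditional term by the inductive hypothesis. The Dirichlet form for the contracted path uses the transposition across the gap at $x$, namely $\tau_{x-1,x+1}$. I would compare it to the original adjacent moves using $\tau_{x-1,x+1}=\tau_{x-1}\tau_x\tau_{x-1}$; the reordering of card $N$ is harmless after averaging. This gives a conditional term of at most $c_1(N-1)^2(1+O(1/N))\mathcal D_N^{\mathrm{path}}(f)$.

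Third, bound the second term, which is an entropy on the one-card marginal $\sigma(N)\in[N]$ under uniform measure. The function $g(x)=\E[f^2\mid\sigma(N)=x]^{1/2}$ lives on a path of length $N$. The random walk on $[N]$ has log-Sobolev constant $O(N^2)$ by a classical elementary estimate. It then remains to bound $\sum_x (g(x+1)-g(x))^2$ by the part of $N\,\mathcal D_N^{\mathrm{path}}(f)$ coming from swaps that move card $N$. To do this, write the move of card $N$ from $x$ to $x+1$ as the transposition $\tau_x$ applied to configurations, and use Cauchy--Schwarz on the conditional expectations.

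\emph{Main obstacle.} A naive induction accumulates constants: $c_1(N-1)^2(1+O(1/N))+CN^2$ does not close to $c_1N^2$. To close the recursion, one must show the marginal term costs only $O(N)$ times the Dirichlet form in the right normalization. Equivalently, one can average over which card is conditioned on, as Lee--Yau do, exploiting that the Dirichlet form is shared among the $N$ choices. This yields a gain factor $(1-2/N)$ that absorbs the error.

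I would attempt this symmetrization first. If it proves messy, I would fall back on citing Lee and Yau's theorem for the symmetric exclusion/interchange process on the segment. Alternatively, I would use the comparison method of Diaconis--Saloff-Coste together with the known $O(N\log N)$ log-Sobolev bound for random transpositions. That comparison loses a $\log N$, however, so the symmetrized induction is really needed for the sharp $N^2$.
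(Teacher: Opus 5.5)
There is a genuine gap. The proposal fails at the step you flag as the main obstacle, and the symmetrization you propose does not fix it.

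Write $\mathcal D^{(i)}(f)$ for the contracted-path Dirichlet form obtained by conditioning on $\sigma(i)=x$, averaged over $x$. This form must include the rate-$1$ gap edge $\{x-1,x+1\}$, because the induction hypothesis for $\SSS_{N-1}$ is for the standard path chain. Each adjacent edge avoids exactly $N-2$ cards, and each site is occupied by exactly one card. So averaging over $i$ gives the exact identity
\[
\frac1N\sum_{i=1}^N\mathcal D^{(i)}(f)=\Bigl(1-\frac2N\Bigr)\mathcal D_N^{\mathrm{path}}(f)+\frac1N\,\mathcal D_{\mathrm{nnn}}(f),
\qquad
\mathcal D_{\mathrm{nnn}}(f)=\frac12\sum_{x=2}^{N-1}\E_{\pi_N}\Bigl[\bigl(f(\tau^{(N)}_{x-1,x+1}\circ\sigma)-f(\sigma)\bigr)^2\Bigr].
\]
The gap edges therefore cost a term of the same order $1/N$ as the $(1-2/N)$ gain, and this cost is not small:
\begin{enumerate}
\item Your three-move simulation $\tau_{x-1}\tau_x\tau_{x-1}$ only gives $\mathcal D_{\mathrm{nnn}}\le 9\,\mathcal D_N^{\mathrm{path}}$.
\item No simulation can do better than about $4$. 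For $f(\sigma)=\sigma(a)$ one computes $\mathcal D_{\mathrm{nnn}}(f)=4(N-2)/N$, while $\mathcal D_N^{\mathrm{path}}(f)=(N-1)/N$.
\end{enumerate}
Your averaged marginal estimate is fine, at $O(N)\,\mathcal D_N^{\mathrm{path}}(f)$, and the same $f$ shows it cannot be smaller in general. Bounding the two terms separately, as you propose, gives the recursion
\[
A_N\le\Bigl(1+\frac{\gamma-2}{N}\Bigr)A_{N-1}+CN,
\qquad
\gamma=\sup_f\frac{\mathcal D_{\mathrm{nnn}}(f)}{\mathcal D_N^{\mathrm{path}}(f)}.
\]
This recursion closes at $O(N^2)$ only if $\gamma<4$. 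Since $\gamma\ge 4(N-2)/(N-1)$, the best possible outcome is $O(N^2\log N)$. That is the same loss as the comparison with random transpositions. With your $\gamma=9$ the recursion gives only $O(N^7)$.

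Incidentally, the unsymmetrized bound $1+O(1/N)$ claimed in your second step is also false. The three-move simulation charges a constant factor to the part of the Dirichlet form near card $N$. Functions supported on configurations in which card $N$ and two fixed cards are consecutive make the contracted form exceed $\mathcal D_N^{\mathrm{path}}(f)$ by a fixed factor.

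The missing idea is to stop removing one card at a time and split in half instead. The paper conditions on the set $S$ of labels occupying the left half $I_1=\{1,\ldots,\lfloor N/2\rfloor\}$.
\begin{enumerate}
\item Given $S$, the law is a product of uniform arrangements on two genuine subpaths. After discarding the single cut edge, the conditional entropy is at most $\max(A_k,A_{N-k})\,\mathcal D_N^{\mathrm{path}}(f)$. There is no lost factor and no gap edge to simulate.
\item The marginal of $S$ is the balanced Bernoulli--Laplace model. Lee and Yau's inequality for it has constant $c_2/N$ on the unnormalized sum of swaps. Combined with writing each $\tau^{(N)}_{i,j}$ as $2(j-i)-1$ adjacent transpositions (congestion $O(N^3)$), this term costs $c_4N^2\,\mathcal D_N^{\mathrm{path}}(f)$.
\item The recursion $A_N\le\max(A_k,A_{N-k})+c_4N^2$ then sums geometrically over the halving levels. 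No sharp first-order constant is ever needed.
\end{enumerate}
The input the paper takes from Lee--Yau is only this Bernoulli--Laplace inequality. Your fallback of citing an $N^2$ bound for the labeled path directly would need a precise reference.
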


\begin{proof}
The case $N=1$ is immediate. Suppose $N\geq2$, and let $k=\lfloor N/2\rfloor$. We use the balanced Bernoulli--Laplace inequality of Lee and Yau \cite[Theorem~5]{LeeYau}; see also \cite[Theorem~1.6]{Salez}. Let $\nu$ be the uniform probability measure on the set $\binom{[N]}{k}$ of $k$-element subsets of $[N]$. For every function $g\colon\binom{[N]}{k}\to\mathbb R_{\geq 0}$, we have 
\begin{equation}\label{eq:bl}
\Ent_\nu(g^2)\leq\frac{c_2}{N}\E_\nu\left[\sum_{\substack{a\in S\\b\notin S}}\bigl(g(S\setminus\{a\}\cup\{b\})-g(S)\bigr)^2\right].
\end{equation}
The sum on the right-hand side is not normalized.

Let $I_1=\{1,\ldots,k\}$ and $I_2=\{k+1,\ldots,N\}$. For a uniformly random permutation $\sigma\in\SSS_N$, let
\[S=\{a\in[N]:\sigma(a)\in I_1\}. \]
Then $S$ is uniformly distributed on $\binom{[N]}{k}$. Conditional on the event $S=A$, the labels in $A$ are uniformly arranged on $I_1$, the labels in $[N]\setminus A$ are uniformly arranged on $I_2$, and these two arrangements are independent. Define $g\colon\binom{[N]}{k}\to\mathbb R_{\geq 0}$ by 
\[g(A)=\left(\E_{\pi_N}[f^2\mid S=A]\right)^{1/2}. \]
The entropy chain rule tells us that 
\[\Ent_{\pi_N}(f^2)=\E_{A\sim\nu}\left[\Ent_{\pi_N(\,\cdot\,\mid S=A)}(f^2)\right]+\Ent_\nu(g^2).\] 

We first bound $\Ent_\nu(g^2)$. Fix $a\in S$ and $b\notin S$. If $\sigma$ has conditional law $\pi_N(\,\cdot\,\mid S)$, then exchanging the positions occupied by the cards $a$ and $b$ produces a permutation with conditional law
\[
\pi_N\bigl(\,\cdot\,\bigm|S\setminus\{a\}\cup\{b\}\bigr).
\]
Thus, the reverse triangle inequality for the $L^2$ norm bounds the corresponding squared difference in \eqref{eq:bl} by the conditional expectation of the squared change in $f$ produced by exchanging these two cards. After averaging over $S,a$, and $b$, we obtain
\[
\Ent_\nu(g^2)
\leq
\frac{c_2}{N}
\E_{\pi_N}\left[
\sum_{\substack{i\in I_1\\j\in I_2}}
\bigl(f(\tau_{i,j}^{(N)}\circ\sigma)-f(\sigma)\bigr)^2
\right].
\]

For $i<j$, the transposition $\tau_{i,j}^{(N)}$ can be written as \[
\tau_{i}^{(N)}\circ\tau_{i+1}^{(N)}\circ\cdots\tau_{j-1}^{(N)}\cdots\tau_{i+1}^{(N)}\circ\tau_i^{(N)}, \] which is the composition of the sequence of adjacent transpositions with edge indices $i,i+1,\ldots,j-1,j-2,\ldots,i$. This sequence has length $2(j-i)-1\leq 2N$. A fixed path edge belongs to at most $N^2$ intervals $[i,j]$, and its associated adjacent transposition appears at most twice in the sequence associated to any one of those intervals. Applying the Cauchy--Schwarz inequality along each sequence and using the invariance of $\pi_N$ under every intermediate permutation yields the inequality 
\[\sum_{1\leq i<j\leq N}\E_{\pi_N}\left[\bigl(f(\tau_{i,j}^{(N)}\circ\sigma)-f(\sigma)\bigr)^2\right]\leq c_3N^3\sum_{j=1}^{N-1}\E_{\pi_N}\left[
\bigl(f(\tau_j^{(N)}\circ\sigma)-f(\sigma)\bigr)^2 \right]. \]
It follows that
\[\Ent_\nu(g^2)\leq c_4N^2\mathcal D_N^{\mathrm{path}}(f).\]

Let $A_N$ be the smallest constant such that
\[
\Ent_{\pi_N}(f^2)\leq A_N\mathcal D_N^{\mathrm{path}}(f)
\]
for all $f\colon\SSS_N\to\mathbb R$. Conditional on $S$, the two subpaths with vertex sets $I_1$ and $I_2$ contain independent uniformly random permutations of their respective label sets. Applying the logarithmic Sobolev inequalities for these two smaller paths to the first term in the entropy decomposition above gives
\[
A_N\leq\max(A_k,A_{N-k})+c_4N^2.
\]
Since $A_1=0$ and the sizes of the subpaths decrease by a fixed factor under iteration, this recurrence implies that $A_N\leq c_1N^2$.
\end{proof}

We next transfer the estimate in \cref{lem:lsi} to the adjacent transposition shuffle on the cycle. Let
\[\mathcal D_n^{\mathrm{cyc}}(f)=\langle f,-\mathcal L_nf\rangle_{\pi_n}\]
be the Dirichlet form associated to the generator $\mathcal L_n$ in \eqref{eq:shuffle-generator}. Delete the cycle edge $\{n-1,0\}$, and relabel the remaining path as $\mathcal P_n$. The contribution of the remaining edges to $\mathcal D_n^{\mathrm{cyc}}(f)$ is exactly the Dirichlet form of the path chain, after this relabeling. Since the contribution from the deleted edge is nonnegative, \cref{lem:lsi} yields the inequality 
\begin{equation}\label{eq:cycle-lsi}
\Ent_{\pi_n}(f^2)\leq c_1n^2\mathcal D_n^{\mathrm{cyc}}(f).
\end{equation}

We will use \eqref{eq:cycle-lsi} after the relative entropy of the shuffle has already become small. For a probability measure $\alpha$ on $\SSS_n$, let $\alpha P_t$ denote the distribution obtained by starting the adjacent transposition shuffle on $\mathcal C_n$ with initial distribution $\alpha$ and running it for time $t$. Then
\begin{equation}\label{eq:entropy-decay}
H(\alpha P_t\mid\pi_n)
\leq
e^{-c_5\lambda_n t}H(\alpha\mid\pi_n)
\end{equation}
for some absolute constant $c_5>0$. To see this, fix $t>0$, and let
\[r_t=\frac{\mathrm{d}(\alpha P_t)}{\mathrm{d}\pi_n}. \]
Reversibility implies that
\[\frac{\mathrm{d}}{\mathrm{d}t}H(\alpha P_t\mid\pi_n)=-\frac12 \E_{\pi_n}\left[\sum_{x\in\ZZ/n\ZZ}\bigl(r_t(\tau_x\circ\sigma)-r_t(\sigma)\bigr)
\bigl(\log r_t(\tau_x\circ\sigma)-\log r_t(\sigma)\bigr)\right]. \]
For $u,v>0$, we have $(u-v)(\log u-\log v)\geq4(\sqrt u-\sqrt v)^2$. Consequently,
\[\frac{\mathrm{d}}{\mathrm{d}t}H(\alpha P_t\mid\pi_n)\leq -4\mathcal D_n^{\mathrm{cyc}}(\sqrt{r_t}). \]
Since $\E_{\pi_n}[r_t]=1$, we have
\[\Ent_{\pi_n}(r_t)=H(\alpha P_t\mid\pi_n). \] Hence, applying \eqref{eq:cycle-lsi} to $\sqrt{r_t}$ yields the inequality \[
\frac{\mathrm{d}}{\mathrm{d}t}H(\alpha P_t\mid\pi_n)\leq -\frac{c_6}{n^2}H(\alpha P_t\mid\pi_n). \]
Because $\lambda_n\asymp n^{-2}$, this implies \eqref{eq:entropy-decay}. The calculation is valid for every positive time; letting the starting time decrease to $0$ implies the result for an arbitrary initial distribution $\alpha$.

We will also use a consequence of \cref{lem:lsi} for processes in which some card labels are identified. Fix $s\geq1$, a finite set $\mathcal A$ of colors, and a tuple $(k_a)_{a\in\mathcal A}$ of nonnegative integers satisfying $\sum_{a\in\mathcal A}k_a=s$. Let $\Omega_{s,\mathbf k}$ be the set of assignments of colors to the vertices of $\mathcal P_s$ in which exactly $k_a$ vertices have color $a$ for every $a\in\mathcal A$. Consider the continuous-time Markov chain on $\Omega_{s,\mathbf k}$ in which, for each edge of $\mathcal P_s$, the colors at its endpoints are exchanged at rate $1$. Let $\nu_{s,\mathbf k}$ be the uniform probability measure on $\Omega_{s,\mathbf k}$, and let $\mathcal D_{s,\mathbf k}^{\mathrm{col}}$ be the Dirichlet form of this chain. Then
\begin{equation}\label{eq:colored-poincare}
\Var_{\nu_{s,\mathbf k}}(f)
\leq
c_7s^2\mathcal D_{s,\mathbf k}^{\mathrm{col}}(f)
\end{equation}
for every real-valued function $f$ on $\Omega_{s,\mathbf k}$, with the same type of absolute constant independent of the number of colors and of the values of the $k_a$. Indeed, linearizing the inequality in \cref{lem:lsi} around a constant function gives a $c_7s^2$ Poincar\'e inequality for the path chain on $\SSS_s$. Fix a partition of the $s$ card labels into color classes of sizes $(k_a)_{a\in\mathcal A}$. Mapping a labeled permutation to the induced coloring of the path sends the uniform measure on $\SSS_s$ to $\nu_{s,\mathbf k}$. If $f$ is a function of the coloring and $\widetilde f$ is its pullback to $\SSS_s$, then we have 
\[
\Var_{\pi_s}(\widetilde f)=\Var_{\nu_{s,\mathbf k}}(f)
\quad\text{and}\quad
\mathcal D_s^{\mathrm{path}}(\widetilde f)
=
\mathcal D_{s,\mathbf k}^{\mathrm{col}}(f).
\]
Thus, the Poincar\'e inequality for the labeled path chain gives \eqref{eq:colored-poincare}.

\subsection{The One-Card Walk}

We now study the motion of a single card in the adjacent transposition shuffle on $\mathcal C_n$. If we follow one specified card and ignore all other labels, its position is a continuous-time random walk on $\ZZ/n\ZZ$. From any vertex, it jumps to each of its two neighbors at rate $1$. Thus, its generator is
\[
\Delta f(x)=f(x-1)+f(x+1)-2f(x).
\]
For $i,x\in\ZZ/n\ZZ$ and $t\geq 0$, let
$p_t(i,x)$ be the probability that this walk is at $x$ at time $t$ when it starts at $i$.

For $0\leq j<n$, define
$\lambda_{n,j}=2\bigl(1-\cos(2\pi j/n)\bigr)$. These are the eigenvalues of $-\Delta$. In particular, the quantity $\lambda_n$ defined in \eqref{eq:cutoff-time} is $\lambda_{n,1}=\lambda_{n,n-1}$. Fourier inversion gives
\begin{equation}\label{eq:fourier}
p_t(0,x)
=
\frac1n
\sum_{j=0}^{n-1}
e^{-\lambda_{n,j}t}e^{2\pi\mathrm{i}jx/n},
\end{equation}
where $\mathrm{i}^2=-1$. The terms indexed by $j$ and $n-j$ are complex conjugates, so the right-hand side is real.

Let
\[
h(t)
=
\sum_{x\in\ZZ/n\ZZ}
p_t(0,x)\log\bigl(np_t(0,x)\bigr).
\]
Thus, $h(t)$ is the relative entropy of the distribution $p_t(0,\cdot)$ with respect to the uniform probability measure on $\ZZ/n\ZZ$.

\begin{lemma}\label[lemma]{lem:heat}
There are absolute constants $a_0,c_8>0$ such that, for every $n\geq3$ and every $t\geq a_0/\lambda_n$,
\begin{equation}\label{eq:heat-bounds}
\frac12\leq np_t(i,x)\leq\frac32
\qquad\text{for all }i,x\in\ZZ/n\ZZ,
\qquad
h(t)\leq c_8e^{-2\lambda_n t}.
\end{equation}
Moreover, we have
\begin{equation}\label{eq:B}
0\leq
B_n
\coloneq
\int_0^\infty
\left(
S_3(t)-\frac{S_2(t)}n
\right)\,\mathrm{d}t
\leq
c_8(1+\log n),
\end{equation}
where \[
S_2(t)=\sum_{x\in\ZZ/n\ZZ}p_t(0,x)^2
\quad\text{and}\quad
S_3(t)=\sum_{x\in\ZZ/n\ZZ}p_t(0,x)^3.
\]
\end{lemma}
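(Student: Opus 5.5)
Everything comes from the Fourier representation \eqref{eq:fourier}, using translation invariance $p_t(i,x)=p_t(0,x-i)$.

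\emph{Pointwise bound.} Write $np_t(0,x)-1=\sum_{j=1}^{n-1}e^{-\lambda_{n,j}t}e^{2\pi\mathrm{i}jx/n}$, so
\[
|np_t(0,x)-1|\le E(t)\coloneq\sum_{j=1}^{n-1}e^{-\lambda_{n,j}t}.
\]
For $1\le j\le n/2$ we have $\lambda_{n,j}\ge c j^2/n^2$, and $\lambda_{n,j}=\lambda_{n,n-j}$. With $\lambda_n\asymp n^{-2}$ and $t\ge a_0/\lambda_n$, this gives $\lambda_{n,j}t\ge c'a_0 j^2$. Hence
\[
E(t)\le 2\sum_{j\ge1}e^{-\lambda_{n,j}t}\le 2e^{-\lambda_n t}\sum_{j\ge1}e^{-(\lambda_{n,j}-\lambda_n)t},
\]
and $\lambda_{n,j}-\lambda_n\ge c(j^2-1)/n^2$ for $j\le n/2$. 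So $E(t)\le C e^{-\lambda_n t}$, with $C$ bounded once $a_0\ge1$. Choosing $a_0$ large makes $Ce^{-a_0}\le 1/2$, which yields $1/2\le np_t\le 3/2$.

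\emph{Entropy bound.} Set $u=np_t(0,x)-1$, so $\sum_x u=0$. Use $(1+u)\log(1+u)\le u+u^2$ for $u\ge-1/2$. Then
\[
h(t)\le\frac1n\sum_x u^2=\sum_{j=1}^{n-1}e^{-2\lambda_{n,j}t}
\]
by Parseval, and the same sum estimate gives $h(t)\le c_8e^{-2\lambda_n t}$.

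\emph{Bound on $B_n$.} First, nonnegativity. Since $\sum_x p_t(0,x)=1$, Cauchy--Schwarz gives
\[
S_2^2=\Bigl(\sum_x p^{3/2}p^{1/2}\Bigr)^2\le S_3\cdot 1,
\]
and $S_2\ge 1/n$. Therefore $S_3\ge S_2^2\ge S_2/n$, so the integrand is pointwise $\ge0$.

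Next, the upper bound. Split the integral at $T=a_0/\lambda_n\asymp n^2$.

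\emph{Short times, $t\le T$.} Use $S_3-S_2/n\le S_3\le \max_x p_t(0,x)\,S_2\le \max_x p_t(0,x)$. The standard local bound $\max_x p_t(0,x)\le C\min(1,t^{-1/2}+1/n)$ follows from \eqref{eq:fourier}:
\[
\max_x p_t(0,x)\le \frac1n\sum_j e^{-\lambda_{n,j}t}\le \frac1n+\frac{C}{n}\sum_{j\ge1}e^{-cj^2t/n^2}\le \frac1n+C\min(1,t^{-1/2}).
\]
Integrating over $[0,T]$, the $1/n$ term contributes $T/n$, which is far too big. I must therefore refine using $S_3-S_2/n$ rather than $S_3$. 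Write $p=\frac1n(1+u)$. Then
\[
S_3-\frac{S_2}{n}=\frac1{n^3}\sum_x\bigl[(1+u)^3-(1+u)^2\bigr]=\frac1{n^3}\sum_x (1+u)^2u=\frac1{n^3}\sum_x(u^2+u^3+2u\cdot u)\cdot\ldots
\]
More cleanly, since $\sum_x u=0$,
\[
\sum_x(1+u)^2u=2\sum_x u^2+\sum_x u^3.
\]
Hence $S_3-S_2/n=\frac1{n^3}\bigl(2\sum u^2+\sum u^3\bigr)\le \frac{1}{n^3}(2+\max|u|)\sum u^2$. Now $\frac1n\sum u^2=n S_2-1$ and $\max|u|\le n\max_x p_t(0,x)$. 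So the integrand is at most
\[
\frac{C}{n}\bigl(nS_2-1\bigr)\bigl(1+n\max_x p_t(0,x)\bigr).
\]
By Parseval, $nS_2-1=\sum_{j\ne0}e^{-2\lambda_{n,j}t}\le C\min(n,\,n t^{-1/2})$ for $t\le T$. Combined with $n\max_x p_t(0,x)\le C(1+n\min(1,t^{-1/2}))$, the integrand is $\le C\min(1,t^{-1})$ for $t\le T$, up to lower-order terms of size $C\,t^{-1/2}/n$ that integrate to $O(1)$ over $[0,T]$. Integrating gives $C(1+\log T)=O(1+\log n)$.

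\emph{Long times, $t\ge T$.} Here $|u|\le 1/2$, so the integrand is at most
\[
\frac{C}{n^2}\sum_{j\ne0}e^{-2\lambda_{n,j}t}\le \frac{C}{n^2}e^{-2\lambda_n t},
\]
whose integral is $O(1/(n^2\lambda_n))=O(1)$.

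The main obstacle is the short-time regime: the naive bound $S_3\le\max p$ loses the cancellation that the subtraction of $S_2/n$ provides. The identity $S_3-S_2/n=n^{-3}\bigl(2\sum u^2+\sum u^3\bigr)$, combined with the Parseval and heat-kernel bounds, is what produces the harmonic-type integrand $\min(1,1/t)$ and hence the logarithm.
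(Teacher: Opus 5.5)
Your proof is correct. For the pointwise bound, the entropy bound, the nonnegativity of $B_n$, and the tail $t\ge T$, it follows the paper. Your inequality $(1+u)\log(1+u)\le u+u^2$ is the paper's $\log z\le z-1$ step giving $h(t)\le nS_2(t)-1$. In the tail the paper also uses the identity $S_3-S_2/n=n^{-3}\sum_x(2u^2+u^3)$ together with $\max_x|u|\lesssim e^{-\lambda_n t}$.

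The genuine difference is the short-time regime. The paper needs no cancellation for $t\lesssim n^2$. It uses
\[
S_3\le\bigl(\max_x p_t(0,x)\bigr)^2\sum_x p_t(0,x)=\bigl(\max_x p_t(0,x)\bigr)^2\le \frac{c}{1+t}+\frac{c}{n^2},
\]
whose integral over $[0,n^2]$ is already $O(1+\log n)$. Your first attempt failed only because, in $S_3\le \max_x p_t(0,x)\cdot S_2$, you bounded $S_2\le 1$. Bounding $S_2\le\max_x p_t(0,x)$ instead is exactly the paper's argument. So your closing diagnosis, that the naive bound ``loses the cancellation'' from subtracting $S_2/n$, is not accurate. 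That subtraction matters only for $t\gtrsim n^2$, where $S_3$ and $S_2/n$ are both of order $n^{-2}$ and only their difference decays.

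Your alternative, applying the $u$-identity with Parseval on all of $[0,T]$, does work. Its mild merit is that one formula handles both regimes, but it is longer than the paper's two-line short-time bound.

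Fix one arithmetic slip. Since $\sum_x u^2=n(nS_2-1)$, the integrand is bounded by $n^{-2}(2+\max_x|u|)(nS_2-1)$, not $\frac{C}{n}(nS_2-1)(1+n\max_x p_t(0,x))$. With your stated prefactor $C/n$, the next step would give something of order $t^{-1/2}+n/t$, which does not integrate to $O(\log n)$. With the correct $n^{-2}$ you get exactly the bound $C\min(1,t^{-1})+Ct^{-1/2}/n$ that you go on to use. Also delete the abandoned line ending in ``$\cdot\ldots$''.
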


\begin{proof}
For $0\leq j<n$, let $j_*=\min(j,n-j)$.
Since
\[\lambda_{n,j}=4\sin^2\left(\frac{\pi j_*}{n}\right)
\quad\text{and}\quad
\lambda_n=4\sin^2\left(\frac{\pi}{n}\right), \]
the elementary bounds
\[
\frac{2u}{\pi}\leq\sin u\leq u\quad(0\leq u\leq\pi/2)
\]
imply that 
\[\frac{\lambda_{n,j}}{\lambda_n}\geq
\frac{4j_*^2}{\pi^2}. \]
It follows from \eqref{eq:fourier} that
\[\left|np_t(0,x)-1\right|
\leq
\sum_{j=1}^{n-1}e^{-\lambda_{n,j}t}.
\]
If $t\geq a_0/\lambda_n$, the preceding eigenvalue bound shows that the right-hand side is at most
\[
2\sum_{r=1}^\infty e^{-4a_0r^2/\pi^2}.
\]
Choosing the absolute constant $a_0$ sufficiently large makes this quantity at most $1/2$. Translation invariance of the walk then gives the bounds 
\[
\frac12\leq np_t(i,x)\leq\frac32
\]
for all $i,x$.

For $t\geq a_0/\lambda_n$, the same eigenvalue estimate, now separating the two modes with eigenvalue $\lambda_n$ from the remaining modes, gives the estimate 
\[\max_x\left|np_t(0,x)-1\right|
\leq c_9e^{-\lambda_n t}.
\]
Applying Parseval's identity to \eqref{eq:fourier}, we deduce that 
\[nS_2(t)-1=\sum_{j=1}^{n-1}e^{-2\lambda_{n,j}t}\leq
c_{10}e^{-2\lambda_n t}. \]
Since $\log u\leq u-1$ for $u>0$, we have 
\[h(t)\leq\sum_xp_t(0,x)\bigl(np_t(0,x)-1\bigr)=nS_2(t)-1. \]
This proves \eqref{eq:heat-bounds}.

We next prove \eqref{eq:B}. Since $\sum_xp_t(0,x)=1$, it follows from the Cauchy--Schwarz inequality that 
\[S_3(t)=\sum_xp_t(0,x)^3\geq\left(\sum_xp_t(0,x)^2\right)^2=S_2(t)^2. \]
Also, $S_2(t)\geq1/n$, so
\[S_3(t)\geq S_2(t)^2\geq\frac{S_2(t)}n.\]
Hence, the integrand defining $B_n$ is nonnegative. We now bound its integral. The Fourier representation and the preceding eigenvalue estimates imply the standard bound
\[\max_xp_t(0,x)\leq\frac{c_{11}}{\sqrt{1+t}}+\frac{c_{11}}{n}\] for $t\geq 0$.
Indeed, the Fourier sum can be bounded by a Gaussian sum of the form
\[
\frac{c_{12}}{n}\sum_{r\geq0}e^{-c_{13}r^2t/n^2}.
\]
Because $\sum_xp_t(0,x)=1$, we have
\[S_3(t)\leq\bigl(\max_xp_t(0,x)\bigr)^2.\]
Consequently, for $0\leq t\leq n^2$,
\[
0\leq
S_3(t)-\frac{S_2(t)}n
\leq
\frac{c_{14}}{1+t}+\frac{c_{14}}{n^2}.
\]
Integrating over $[0,n^2]$ gives a bound of order $1+\log n$.

It remains to control the tail. For $t\geq n^2$, write
\[
p_t(0,x)=\frac{1+r_x(t)}{n}.
\]
Then
\[
\sum_xr_x(t)=0.
\]
The Fourier representation gives the inequality 
\[
\max_x|r_x(t)|\leq c_{15}e^{-\lambda_n t}.
\]
A direct expansion yields the identity 
\[
S_3(t)-\frac{S_2(t)}n
=
\frac1{n^3}
\sum_x
\left(2r_x(t)^2+r_x(t)^3\right).
\]
Since $r_x(t)\geq-1$, every summand on the right is nonnegative, and the preceding bound on $r_x(t)$ gives the bound 
\[
S_3(t)-\frac{S_2(t)}n
\leq
\frac{c_{16}}{n^2}e^{-2\lambda_n t}.
\]
Finally, $\lambda_n\asymp n^{-2}$, so the integral of this bound over $[n^2,\infty)$ is bounded by an absolute constant. Combining the two time ranges proves \eqref{eq:B}. 
\end{proof}
 
\section{Random Exposure and Conditional Prediction}\label{sec:exposure}

We use the entropy chain rule by revealing the positions of the cards one at a time. The order in which the card labels are revealed will itself be chosen uniformly at random. At each stage, we compare the conditional distribution of the next card position with a probability distribution obtained from the 1-card transition probabilities introduced in \cref{sec:prelim}.

\subsection{An Entropy Comparison}

Let $W\colon(\ZZ/n\ZZ)\times(\ZZ/n\ZZ)\to\mathbb R_{>0}$ satisfy
\[
\sum_{x\in\ZZ/n\ZZ}W(i,x)=n
\]
for every $i\in\ZZ/n\ZZ$. Let $\mu$ be a probability measure on $\SSS_n$, and let $\sigma$ be a random permutation with law $\mu$. Independently of $\sigma$, choose a uniformly random ordering $(J_1,\ldots,J_n)$ of the card labels.

For $m\in[n]$, define
$i_m=J_{n-m+1}$,
and let
$U_m=\{J_{n-m+1},\ldots,J_n\}$
be the set of labels that have not yet been exposed when $m$ labels remain. Let
\[
R_m=\{\sigma(i):i\in U_m\}
\]
be the corresponding set of unexposed positions. Thus, $|U_m|=|R_m|=m$, and $i_m\in U_m$ is the next label to be exposed. Let $\mathcal F_m$ be the sigma-algebra generated by the entire ordering $(J_1,\ldots,J_n)$ together with the positions $\sigma(J_1),\ldots,\sigma(J_{n-m})$ of the labels that have already been exposed. In particular, $i_m$ and $R_m$ are $\mathcal F_m$-measurable.

For $i\in\ZZ/n\ZZ$ and a nonempty set $R\subseteq\ZZ/n\ZZ$, define the probability measure $\widehat q_{i,R}$ on $R$ by
\[
\widehat q_{i,R}(x)=\frac{W(i,x)}{\sum_{y\in R}W(i,y)}.
\]
All expectations in the next lemma are taken with respect to the joint law of $\sigma$ and the independent random ordering $(J_1,\ldots,J_n)$.

\begin{lemma}\label[lemma]{lem:exposure}
Suppose there are constants $0<\ell\leq M<\infty$ such that
$\ell\leq W(i,x)\leq M$
for all $i,x\in\ZZ/n\ZZ$. Then there is a constant $K_{\ell,M}>0$, depending only on $\ell$ and $M$, such that
\begin{equation}\label{eq:exposure}
\begin{split}
H(\mu\mid\pi_n)\leq{}&\sum_{m=1}^n\E\left[H\bigl(\mu(\sigma(i_m)\in\cdot\mid\mathcal F_m)\mid\widehat q_{i_m,R_m}\bigr)\right]\\
&+\sum_{i\in\ZZ/n\ZZ}\E_\mu[\log W(i,\sigma(i))]+K_{\ell,M}(1+\log n).
\end{split}
\end{equation}
\end{lemma}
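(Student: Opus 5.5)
The plan is to apply the entropy chain rule along the random exposure order and then compare each conditional law with $\widehat q_{i_m,R_m}$ via a telescoping identity. Since the ordering $(J_1,\ldots,J_n)$ is independent of $\sigma$, we can condition on it. The permutation $\sigma$ is then a bijective function of the sequence $(\sigma(J_1),\ldots,\sigma(J_n))$, and the uniform measure $\pi_n$ exposes each next position uniformly on the remaining unexposed set. The chain rule then gives
\[
H(\mu\mid\pi_n)=\sum_{m=1}^n\E\left[H\bigl(\mu(\sigma(i_m)\in\cdot\mid\mathcal F_m)\bigm|\mathrm{Unif}(R_m)\bigr)\right],
\]
where at stage $m$ the label $i_m$ is exposed and $R_m$ is the set of unexposed positions. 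Here $\mathcal F_m$ determines $R_m$, and the conditional law of $\sigma(i_m)$ is supported on $R_m$.

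Next, for a probability $\rho$ on $R$ we use the exact identity
\[
H(\rho\mid\mathrm{Unif}(R))=H(\rho\mid\widehat q_{i,R})+\sum_{x\in R}\rho(x)\log\bigl(|R|\,\widehat q_{i,R}(x)\bigr).
\]
Writing $Z_m=\sum_{y\in R_m}W(i_m,y)$, the second term equals $\sum_x\rho(x)\log W(i_m,x)-\log(Z_m/m)$. We average over the conditional law with $\rho=\mu(\sigma(i_m)\in\cdot\mid\mathcal F_m)$ and use the tower property. Since each label appears exactly once as some $i_m$, the first piece sums over $m$ to exactly $\sum_i\E_\mu[\log W(i,\sigma(i))]$. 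It therefore remains to show
\[
-\sum_{m=1}^n\E\bigl[\log(Z_m/m)\bigr]\leq K_{\ell,M}(1+\log n).
\]

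For this last bound, the key input is randomness of the exposure order. Conditional on $R_m$ (or more precisely on $U_m$ and $R_m$), the label $i_m$ is uniform in $U_m$. The problem is that $Z_m$ involves the pair $(i_m,R_m)$, and $R_m$ depends on $\sigma$. We bound $-\log(Z_m/m)$ using $\log u\geq (u-1)-(u-1)^2/(2\min(u,1)^2)$ or a similar quadratic bound, which applies because $Z_m/m\in[\ell,M]$. This gives
\[
-\log(Z_m/m)\leq 1-Z_m/m+C_{\ell,M}(Z_m/m-1)^2.
\]
The linear term is handled by averaging over $i_m$ uniform in $U_m$, where $\sigma$ is a bijection $U_m\to R_m$. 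One has $\frac1m\sum_{i\in U_m}\sum_{y\in R_m}W(i,y)$, but this does not obviously equal $m$. So I would instead use the bound $\sum_{x}W(i,x)=n$ and symmetrize differently. Alternatively, both the linear and quadratic terms are controlled by the fact that $Z_m/m$ is an average of $W(i_m,\cdot)$ over a set $R_m$ of size $m$, whose full average over $\ZZ/n\ZZ$ is $1$.

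The main obstacle is exactly this step. I would try to make the deviation of $Z_m/m$ from $1$ small on average, namely bounded by $O(1/m)$ summed appropriately, so that the total is $O(\log n)$. A cleaner route is to note that $-\log(Z_m/m)$ is a bounded quantity, at most $\log(1/\ell)$. For the last $O(1)$ stages trivial bounds suffice. For larger $m$, the sum of the expected errors can be compared to the entropy cost of sampling without replacement: $\prod_m (Z_m/m)$ is related to the Plackett--Luce probability $\prod_m W(i_m,\sigma(i_m))/Z_m$ versus $1/n!$. If this approach stalls, I would fall back on the harmonic-sum bound $\sum_m C/m$, obtained from a variance estimate for averages over random subsets, which yields the $1+\log n$ term.
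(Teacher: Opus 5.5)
Your reduction is correct and matches the paper's first step exactly. The chain rule along the exposure order with uniform reference law on $R_m$, the identity $H(\rho\mid\mathrm{Unif}(R))=H(\rho\mid\widehat q_{i,R})+\sum_x\rho(x)\log(|R|\,\widehat q_{i,R}(x))$, and the reassembly of the $\log W(i_m,\sigma(i_m))$ terms into $\sum_i\E_\mu[\log W(i,\sigma(i))]$ reduce the lemma to showing $-\sum_m\E[\log(Z_m/m)]\le K_{\ell,M}(1+\log n)$. Your inequality $-\log u\le 1-u+C_{\ell,M}(u-1)^2$ on $[\ell,M]$ is also a suitable tool. The gap is that this remaining bound, the only substantive estimate in the lemma, is never proved. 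You leave the linear term unresolved, the Plackett--Luce idea is not turned into an inequality, and the ``fallback'' variance estimate is asserted without saying which random subset it concerns or why that subset is uniform.

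The missing idea is the right conditioning. You condition on $(U_m,R_m)$ and average over $i_m$ uniform in $U_m$, but this freezes exactly the randomness that makes $Z_m/m$ concentrate: for a fixed set $R_m$, the average of $W(i,\cdot)$ over $R_m$ can be far from $1$. Also, the dependence of $R_m$ on $\sigma$ is not an obstacle. Since $\mu$ is arbitrary you need a bound uniform in $\sigma$ anyway, so fix $\sigma$ and $i_m=i$ and use only the randomness of the ordering. Given these, $U_m\setminus\{i\}$ is a uniform $(m-1)$-subset of the other $n-1$ labels, so $R_m=\sigma(U_m)$ is a uniform $m$-subset of positions containing $\sigma(i)$. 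Thus $Z_m/m$ equals $\frac1m W(i,\sigma(i))$ plus $\frac1m$ times a without-replacement sample sum of size $m-1$ from the values $\{W(i,y):y\neq\sigma(i)\}$. By the row-sum normalization, these values total $n-W(i,\sigma(i))$. The standard sampling formulas then give
\[
\E\bigl[Z_m/m-1\bigm| i_m=i,\sigma\bigr]=\frac{n-m}{m(n-1)}\bigl(W(i,\sigma(i))-1\bigr),\qquad \E\bigl[(Z_m/m-1)^2\bigm| i_m=i,\sigma\bigr]\le\frac{K_{\ell,M}}{m}.
\]
Plugging these into your quadratic inequality yields $\E[-\log(Z_m/m)\mid i_m=i,\sigma]\le K_{\ell,M}/m$ uniformly in $i$ and $\sigma$. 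Summing over $m$ gives the harmonic bound $K_{\ell,M}(1+\log n)$. With this step supplied, your argument becomes the paper's proof.
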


\begin{proof}
For $m\in[n]$, define the positive random variable
\[
Z_m=\frac1m\sum_{x\in R_m}W(i_m,x).
\]
Under the uniform measure $\pi_n$, conditional on $\mathcal F_m$, the position of card $i_m$ is uniformly distributed on $R_m$. Applying the entropy chain rule along the exposure ordering and then rewriting the uniform conditional measure on $R_m$ relative to $\widehat q_{i_m,R_m}$ yields the identity
\begin{align*}
H(\mu\mid\pi_n)={}&\sum_{m=1}^n\E\left[H\bigl(\mu(\sigma(i_m)\in\cdot\mid\mathcal F_m)\mid\widehat q_{i_m,R_m}\bigr)\right]\\
&+\sum_{i\in\ZZ/n\ZZ}\E_\mu[\log W(i,\sigma(i))]-\E\left[\sum_{m=1}^n\log Z_m\right].
\end{align*}
We are left to obtain a lower bound for the last expectation.

Fix a permutation $\sigma$, and condition on the event $i_m=i$. Under this conditioning, $R_m$ is uniformly distributed among the $m$-element subsets of $\ZZ/n\ZZ$ that contain $\sigma(i)$. Since the $i$-th row of $W$ has sum $n$, direct sampling without replacement yields the identity
\[
\E[Z_m-1\mid i_m=i,\sigma]
=
\frac{n-m}{m(n-1)}\bigl(W(i,\sigma(i))-1\bigr).
\]
The variance formula for sampling without replacement and the bounds $\ell\leq W\leq M$ yield
\[
\E[(Z_m-1)^2\mid i_m=i,\sigma]\leq\frac{K_{\ell,M}}m
\]
after increasing $K_{\ell,M}$ if necessary. Because $Z_m\in[\ell,M]$, Taylor's theorem applied to $\log z$ on the interval $[\ell,M]$ yields
\[
\left|\E[\log Z_m\mid i_m=i,\sigma]\right|\leq\frac{K_{\ell,M}}m.
\]
This estimate is uniform in $i$ and $\sigma$. Averaging over $i_m$ and $\sigma$ and then summing over $m$ yields
\[
\left|\E\left[\sum_{m=1}^n\log Z_m\right]\right|\leq K_{\ell,M}(1+\log n).
\]
Substituting this inequality into the preceding entropy identity proves \eqref{eq:exposure}.
\end{proof}

We will apply \cref{lem:exposure} with
$W(i,x)=np_t(i,x)$ and $\mu=\mu_t$. Under $\mu_t$, the position of card $i$ has distribution $p_t(i,\cdot)$. Therefore, translation invariance of the 1-card walk yields the identity
\begin{equation}\label{eq:1-card-log-term}
\sum_{i\in\ZZ/n\ZZ}\E_{\mu_t}[\log W(i,\sigma(i))]=nh(t).
\end{equation}
We next bound the conditional relative entropies in \eqref{eq:exposure}.

\subsection{Conditioning on Revealed Trajectories}\label{subsec:revealed}

Fix $m\in[n]$. Let $A$ be a uniformly random $m$-element subset of the card labels, chosen independently of all Poisson clocks used to construct the shuffle. We call the labels in $A$ \dfn{red} and the labels in $(\ZZ/n\ZZ)\setminus A$ \dfn{black}. Since the shuffle starts from the identity permutation, the red labels initially occupy the vertex set $A$.

Let $(\sigma_s)_{s\geq0}$ denote the adjacent transposition shuffle on the cycle $\mathcal C_n$ started from the identity. For $s\geq 0$, define
\[
R_s=\{\sigma_s(i):i\in A\}.
\]
Thus, $R_s$ is the set of vertices occupied by red cards at time $s$. The process $(R_s)_{s\geq0}$ is the $m$-particle simple exclusion process on $\mathcal C_n$. Because $A$ is uniformly distributed among the $m$-element subsets of $\ZZ/n\ZZ$, the initial state $R_0$ has the uniform stationary distribution of this exclusion process. Consequently, $R_s$ is uniformly distributed among the $m$-element subsets of $\ZZ/n\ZZ$ for every fixed $s\geq0$.

Fix $t\geq0$. Let $\mathcal G_t$ be the sigma-algebra generated by $A$ and by the trajectories
$(\sigma_s(j))_{0\leq s\leq t}$
of all black labels $j\notin A$. The path $(R_s)_{0\leq s\leq t}$ is $\mathcal G_t$-measurable. For $i\in A$ and $x\in\ZZ/n\ZZ$, define
\begin{equation}\label{eq:conditional-kernel}
K_t(i,x)=\Prob(\sigma_t(i)=x\mid\mathcal G_t).
\end{equation}
Because every red card occupies a vertex of $R_t$, we have $K_t(i,x)=0$ whenever $x\notin R_t$.

Retain the positions of all black cards, and replace every red label by a single blank red card (see \cref{fig:forget-red}). A transition changes this projection exactly when the exchanged edge has at least one endpoint occupied by a black card. The rate of each such transition, and hence their total rate, is independent of the individual red labels. Conditional on $\mathcal G_t$, the positions of the red cards on
$[0,t]$ have the following distribution. Start each red card $i\in A$ at vertex~$i$, and use independent Poisson clocks of rate~$1$, one for each edge of $\mathcal C_n$. When a clock rings, exchange the red cards at its endpoints if both endpoints belong to $R_s$, and otherwise make no change. At each jump of $(R_s)_{0\le s\le t}$, move the red card at
the unique vertex leaving $R_s$ to the unique vertex entering $R_s$.
The independence of the transition rates described above from the individual red labels ensures that conditioning on the trajectories of black cards, including the times of their jumps, introduces no further change to this distribution. This construction depends only on $(R_s)_{0\le s\le t}$. Since $A=R_0$, it follows that for $i\in A$,
we have \[K_t(i,x)=\mathbb P\bigl(\sigma_t(i)=x\mid (R_s)_{0\le s\le t}\bigr). \]

\begin{figure}[ht]
\begin{center}{\includegraphics[height=37.390mm]{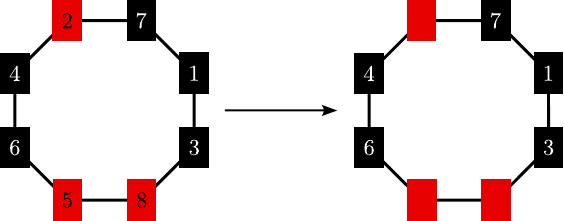}}
\end{center}
\caption{An illustration with $n=8$, $m=3$, and $A=\{2,5,8\}$. The projection forgets the labels of the red cards. }   
\label{fig:forget-red}
\end{figure}

For $i\in A$, the vector $((n/m)p_t(i,x))_{x\in R_t}$ is not necessarily a probability vector because its coordinates need not sum to $1$. We nevertheless compare $K_t(i,\cdot)$ with these unnormalized reference masses. Define
\begin{equation}\label{eq:error-definition}
\mathcal E_{n,m}(t)
=
\E\left[
\frac1m\sum_{i\in A}\sum_{x\in R_t}
\left(K_t(i,x)-\frac nm p_t(i,x)\right)^2
\right],
\end{equation}
where the expectation is taken over the random set $A$ and the Poisson clocks of the shuffle.

\begin{proposition}\label[proposition]{prop:entropy-error}
There is an absolute constant $K_0>0$ such that, for every $t\geq a_0/\lambda_n$, where $a_0$ is the constant in \cref{lem:heat},
\begin{equation}\label{eq:entropy-error}
H(\mu_t\mid\pi_n)
\leq
nh(t)+K_0(1+\log n)+2\sum_{m=1}^n m\mathcal E_{n,m}(t).
\end{equation}
\end{proposition}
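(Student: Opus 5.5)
The plan is to apply \cref{lem:exposure} with $\mu=\mu_t$ and $W(i,x)=np_t(i,x)$. By \cref{lem:heat}, for $t\geq a_0/\lambda_n$ we may take $\ell=1/2$ and $M=3/2$, and the rows of $W$ sum to $n$. The term $\sum_i\E_{\mu_t}[\log W(i,\sigma(i))]$ equals $nh(t)$ by \eqref{eq:1-card-log-term}, and $K_{1/2,3/2}(1+\log n)$ goes into $K_0(1+\log n)$. It therefore remains to show that the $m$-th conditional relative entropy term is at most $2m\mathcal E_{n,m}(t)$, up to an error whose sum over $m$ is $O(1+\log n)$.

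\emph{Step 1: from relative entropy to squared errors.} Use $H(P\mid q)\leq\chi^2(P\mid q)=\sum_{x\in R}(P(x)-q(x))^2/q(x)$ with $q=\widehat q_{i_m,R_m}$. From $\frac12\leq W\leq\frac32$ we get $q(x)\geq c/m$, so the relative entropy is at most $O(m)\sum_{x\in R_m}(P(x)-q(x))^2$. For the sharp constant $2$ I would try instead to bound $q(x)$ from below by $\frac nm p_t(i,x)/Z_m$ with $Z_m$ close to $1$, where $Z_m$ is the quantity from the proof of \cref{lem:exposure}. The deviation of $Z_m$ from $1$ would go into the lower-order error. I expect this constant bookkeeping to be the fiddliest point. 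Conceptually, however, the argument only needs some absolute multiple of $m\mathcal E_{n,m}$.

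\emph{Step 2: identify the conditional law with $K_t$.} Match the exposure setup with \cref{subsec:revealed}. The unexposed set $U_m$ is a uniform $m$-subset of labels independent of the shuffle, so it plays the role of the red set $A$; the exposed labels are black, and $R_m=R_t$. The sigma-algebra $\mathcal F_m$ records only the time-$t$ black positions and the ordering, so up to the ordering it is contained in $\mathcal G_t$. Hence $\mu_t(\sigma(i_m)\in\cdot\mid\mathcal F_m)=\E[K_t(i_m,\cdot)\mid\mathcal F_m]$. The reference $q$ and the masses $\frac nm p_t(i_m,\cdot)$ on $R_t$ are $\mathcal F_m$-measurable. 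Conditional Jensen applied to the convex map $v\mapsto\sum_x(v(x)-q(x))^2$ then bounds the expected squared error by $\E\sum_{x\in R_t}(K_t(i_m,x)-q(x))^2$. Finally, given $U_m$ the label $i_m$ is uniform on $U_m=A$, which produces the average $\frac1m\sum_{i\in A}$ appearing in \eqref{eq:error-definition}.

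\emph{Step 3: replace $q$ by the unnormalized masses.} Split
\[
K_t(i,x)-q(x)=\Bigl(K_t(i,x)-\tfrac nm p_t(i,x)\Bigr)+\Bigl(\tfrac nm p_t(i,x)-q(x)\Bigr),
\]
which yields $\mathcal E_{n,m}(t)$ plus a normalization error. The second piece equals $\frac nm p_t(i,x)(1-1/Z_m)$. Its squared sum over $x\in R_t$ is $O(1/m)\cdot(Z_m-1)^2$. By the sampling-without-replacement variance bound in the proof of \cref{lem:exposure}, $\E(Z_m-1)^2=O(1/m)$. Multiplying by the $O(m)$ prefactor gives $O(1/m)$, and summing over $m$ gives $O(1+\log n)$, which is absorbed into $K_0(1+\log n)$. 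Combining Steps 1--3 with \cref{lem:exposure} gives \eqref{eq:entropy-error}. The main obstacle is making Step 1 and Step 3 fit together so that the leading coefficient comes out as exactly $2$ rather than a larger absolute constant.
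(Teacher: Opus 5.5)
Your overall route matches the paper's. You apply \cref{lem:exposure} with $W=np_t$, enlarge $\mathcal F_m$ by the black trajectories so that the conditional law becomes $K_t(i_m,\cdot)$, use conditional Jensen, and average $i_m$ over $A$; Steps 2 and 3 are sound as computations. As written, however, the proposal does not prove the stated inequality. Bounding $H(P\mid q)$ by $\chi^2(P\mid q)$ with $1/q(x)\le 3m$, and then splitting via $(a+b)^2\le 2a^2+2b^2$, puts the coefficient $6$ rather than $2$ in front of $\sum_m m\mathcal E_{n,m}(t)$. You leave that constant open. Your suggested repair does not close it. With $v_x=\frac nm p_t(i,x)$ on $R_t$ and $r=\sum_{x\in R_t}v_x=Z_m$, one has $q=v/r$ and $\chi^2(P\mid q)-\sum_x(P(x)-v_x)^2/v_x=(Z_m-1)\bigl(\sum_xP(x)^2/v_x-1\bigr)$. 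Here $\sum_xP(x)^2/v_x$ can be of order $m$, so this excess is not controlled by $\E(Z_m-1)^2=O(1/m)$ alone. Any route through $\chi^2$ against the normalized $q$ pays this term.

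The missing idea is to compare with the unnormalized masses $v$ directly rather than with $q$. For a probability vector $u$ and positive $v$ with $r=\sum_xv_x$, write $H(u\mid v/r)=\sum_xu_x\log(u_x/v_x)+\log r$. Applying $\log z\le z-1$ to both pieces gives $\sum_xu_x^2/v_x-2+r$, which equals $\sum_x(u_x-v_x)^2/v_x$ exactly. Apply Jensen to the relative entropy itself, which is convex in $u$ with an $\mathcal F_m$-measurable reference. Then take $u=K_t(i,\cdot)$, note that $\widehat q_{i,R_t}=v/r$ exactly, and use $v_x\ge 1/(2m)$ from \cref{lem:heat}. This gives $H\le 2m\sum_{x\in R_t}(K_t(i,x)-v_x)^2$, hence exactly $2m\mathcal E_{n,m}(t)$, and the normalization error of your Step 3 never appears. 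Your argument does prove \eqref{eq:entropy-error} with a larger absolute constant in place of $2$. That weaker form is all the upper bound in \cref{thm:main} needs, since only an absolute multiple of $\sum_m m\int_0^\infty\mathcal E_{n,m}(t)\,\mathrm{d}t$ enters \eqref{eq:averaged-entropy}.
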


\begin{proof}
We first record an elementary inequality. Let $u=(u_x)_{x\in\Lambda}$ be a probability vector on a finite set $\Lambda$, and let $v=(v_x)_{x\in\Lambda}$ have strictly positive coordinates. The vector $v$ is not assumed to have total mass $1$. Then
\begin{equation}\label{eq:unnormalized-kl}
H\left(u\;\middle|\;\frac{v}{\sum_{x\in\Lambda}v_x}\right)
\leq
\sum_{x\in\Lambda}\frac{(u_x-v_x)^2}{v_x}.
\end{equation}
Indeed, if $r=\sum_xv_x$, then the inequality $\log z\leq z-1$ yields
\[
H\left(u\;\middle|\;\frac{v}{r}\right)
\leq
\sum_x\frac{u_x^2}{v_x}-1+\log r.
\]
Since $\log r\leq r-1$, the right-hand side is at most
\[
\sum_x\frac{(u_x-v_x)^2}{v_x},
\]
which proves \eqref{eq:unnormalized-kl}.

Apply \cref{lem:exposure} to a random permutation with law $\mu_t$, taking $W(i,x)=np_t(i,x)$. At the stage with $m$ unexposed labels, let $A=U_m$ be the set of those labels. Let $\mathcal H_m$ be the sigma-algebra generated by $\mathcal F_m$ together with the complete trajectories up to time $t$ of all labels outside $A$. Then $\mathcal F_m\subseteq\mathcal H_m$, and the predictor $\widehat q_{i_m,R_m}$ is $\mathcal F_m$-measurable. Conditional on $\mathcal H_m$, the distribution of the position of card $i_m$ is $K_t(i_m,\cdot)$; the additional information in the exposure ordering concerns only the order in which the red labels will be exposed and is independent of the shuffle. Conditional Jensen's inequality for relative entropy therefore bounds the $m$-th term in \eqref{eq:exposure} by
\[
\E\left[H\bigl(K_t(i_m,\cdot)\mid\widehat q_{i_m,R_t}\bigr)\right].
\]
Conditional on the unordered set $A$ and on the black-card trajectories, the label $i_m$ is uniform on $A$. Averaging over this choice yields the identity
\begin{equation}\label{eq:conditional-kl-bound}
\E\left[H\bigl(K_t(i_m,\cdot)\mid\widehat q_{i_m,R_t}\bigr)\right]
=
\E\left[
\frac1m\sum_{i\in A}
H\bigl(K_t(i,\cdot)\mid\widehat q_{i,R_t}\bigr)
\right].
\end{equation}

For fixed $A$, $i\in A$, and $R_t$, apply \eqref{eq:unnormalized-kl} on the finite set $R_t$ with
\[
u_x=K_t(i,x)
\qquad\text{and}\qquad
v_x=\frac nm p_t(i,x).
\]
By \cref{lem:heat}, the assumption $t\geq a_0/\lambda_n$ implies that $v_x\geq 1/(2m)$
for every $x\in R_t$. Hence, the quantity in \eqref{eq:conditional-kl-bound} is at most
$2m\mathcal E_{n,m}(t)$.
Finally, \cref{lem:heat} also yields $1/2\leq W(i,x)\leq3/2$, while \eqref{eq:1-card-log-term} identifies the 1-card term in \eqref{eq:exposure} with $nh(t)$. Applying \cref{lem:exposure} with $\ell=1/2$ and $M=3/2$ proves \eqref{eq:entropy-error} after renaming the resulting absolute constant as $K_0$.
\end{proof}

\section{The Two-Copy Process}\label{sec:replica}

We next represent the prediction error $\mathcal E_{n,m}(t)$ using a finite reversible Markov chain. Each state records the set of vertices occupied by red cards and two additional vertices belonging to that set. The two vertex coordinates initially coincide at a vertex chosen uniformly from the set of vertices occupied by red cards at time~$0$. For every $t\ge0$, conditional on this initial vertex and $(R_s)_{0\le s\le t}$, the two coordinate processes on $[0,t]$ are independent, and each has the same conditional distribution as the position of the red card that started at that vertex. 

\subsection{Definition of the Two-Copy Process}

Fix $m\in[n]$, and define
\[
\Omega_{n,m}
=
\{(R,x,y):R\subseteq\ZZ/n\ZZ,\ |R|=m,\ x,y\in R\}.
\]
The coordinates $x$ and $y$ are allowed to be equal. For a cycle edge $e=\{u,v\}$, let $\tau_e$ denote the transposition of the vertices $u$ and $v$. We define a continuous-time Markov chain on $\Omega_{n,m}$ by specifying the transitions associated with each edge $e$.

If $|e\cap R|=1$, then at rate $1$ the state $(R,x,y)$ is replaced by
\[
(\tau_e(R),\tau_e(x),\tau_e(y)),
\]
where $\tau_e(R)=\{\tau_e(z):z\in R\}$. Thus, the unique red endpoint of $e$ moves to the other endpoint, and each auxiliary coordinate located at that red endpoint is transported with it. If $e\subseteq R$, then $R$ does not change. If $x\in e$, the state $(R,x,y)$ is replaced at rate $1$ by $(R,\tau_e(x),y)$. Independently, if $y\in e$, the state $(R,x,y)$ is replaced at rate $1$ by $(R,x,\tau_e(y))$. If $e\cap R=\varnothing$, the edge produces no transition. Let $L_{n,m}$ denote the generator of this chain. 

Every nontrivial transition has a reverse transition with the same rate. Hence, the chain is reversible with respect to the uniform probability measure $\varpi_{n,m}$ on $\Omega_{n,m}$. Since
\[
|\Omega_{n,m}|=\binom nm m^2,
\]
every state has $\varpi_{n,m}$-mass $1/[\binom nm m^2]$.

The chain is irreducible. To verify this, consider for a cycle edge $e$ the map
\[
(R,x,y)\longmapsto(\tau_e(R),\tau_e(x),\tau_e(y)).
\]
If $|e\cap R|=1$, this map is one of the transitions defined above. If $e\subseteq R$, its nontrivial action on $x$ and $y$ can be implemented by at most two successive single-coordinate moves. Because adjacent transpositions on the cycle generate the full symmetric group on the vertex set, compositions of these maps act transitively on the states satisfying $x=y$ and also transitively on the states satisfying $x\neq y$. When $m\geq2$, a state with $x=y$ can be transformed so that a second red vertex is adjacent to $x$, after which one auxiliary coordinate can move across that red-red edge while the other remains at $x$. Thus, the sets $\{x=y\}$ and $\{x\neq y\}$ communicate. When $m=1$, every state satisfies $x=y$, and the first transitivity statement proves irreducibility.

We now specify the initial distribution that will be used below. Choose $R_0$ uniformly from the $m$-element subsets of $\ZZ/n\ZZ$, choose $I$ uniformly from $R_0$, and set $X_0=Y_0=I$.
Let $(R_t,X_t,Y_t)_{t\geq0}$ denote the two-copy process with this initial distribution, and define
\begin{equation}\label{eq:coincidence-probability}
C_{n,m}(t)=\Prob(X_t=Y_t).
\end{equation}

Conditional on $(R_s)_{0\le s\le t}$ and on $I=i$, the coordinates $X_t$ and $Y_t$ are independent, and each has conditional distribution $K_t(i,\cdot)$. Indeed, whenever $R_s$ changes, each coordinate at the vertex leaving $R_s$ moves to the vertex entering $R_s$. Between these changes, each coordinate jumps from its current vertex to each adjacent vertex in $R_s$ at rate~$1$, using independent Poisson clocks for the two coordinates. These transitions agree with the conditional motion of card~$i$ described in \cref{subsec:revealed}. 

\begin{lemma}\label[lemma]{lem:error-identity}
For every $t\geq 0$, we have 
\begin{equation}\label{eq:error-identity}
\mathcal E_{n,m}(t)
=
C_{n,m}(t)-\frac nm S_2(t)
+\left(\frac nm\right)^2\frac{n-m}{n-1}
\left(S_3(t)-\frac{S_2(t)}n\right).
\end{equation}
\end{lemma}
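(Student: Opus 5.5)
The plan is to expand the square in \eqref{eq:error-definition} into three terms and evaluate each one exactly. Since $K_t(i,x)=0$ for $x\notin R_t$, I will write
\[
\sum_{x\in R_t}\Bigl(K_t(i,x)-\tfrac nm p_t(i,x)\Bigr)^2
=\sum_{x}K_t(i,x)^2-2\tfrac nm\sum_{x}K_t(i,x)p_t(i,x)+\bigl(\tfrac nm\bigr)^2\sum_{x}p_t(i,x)^2\1\{x\in R_t\},
\]
where the sums now run over all of $\ZZ/n\ZZ$. I then average over $i\in A$ (dividing by $m$) and take expectations. Throughout, translation invariance of the one-card walk gives $\sum_x p_t(i,x)^k=S_k(t)$ for every $i$.

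\emph{First term.} Use the two-copy representation stated just before the lemma. Conditional on $(R_s)_{0\le s\le t}$ and $I=i$, the coordinates $X_t,Y_t$ are independent, each with law $K_t(i,\cdot)$. Hence $\sum_x K_t(i,x)^2=\Prob(X_t=Y_t\mid (R_s)_{s\le t},I=i)$. Two facts then identify the first term with $C_{n,m}(t)$:
\begin{itemize}
\item $R_0=A$ is a uniform $m$-set, so it has the same law as in the two-copy process;
\item $I$ is uniform on $R_0$, which matches averaging $\frac1m\sum_{i\in A}$.
\end{itemize}
Taking expectations gives exactly $C_{n,m}(t)$.

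\emph{Cross term.} Condition on $A$ and fix $i\in A$. The set $A$ is independent of the clocks, and card $i$ starts at vertex $i$. Therefore $\E[K_t(i,x)\mid A]=\Prob(\sigma_t(i)=x)=p_t(i,x)$. Since $p_t$ is deterministic, the cross term equals $\frac nm\sum_x p_t(i,x)^2=\frac nm S_2(t)$ after averaging, so it contributes $-2\frac nm S_2(t)$ in total.

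\emph{Third term.} This is the only step needing a small computation: I need $\Prob(x\in R_t\mid i\in A)$.
\begin{itemize}
\item The vertex $x$ lies in $R_t$ exactly when the card occupying $x$ at time $t$ is red.
\item With probability $p_t(i,x)$ that card is $i$ itself, which is red.
\item Otherwise it is some other label. Given $i\in A$, any other label is red with probability $\frac{m-1}{n-1}$, independently of the clocks.
\end{itemize}
So $\Prob(x\in R_t\mid i\in A)=p_t(i,x)+(1-p_t(i,x))\frac{m-1}{n-1}$. Multiplying by $p_t(i,x)^2$ and summing gives
\[
\tfrac{m-1}{n-1}S_2(t)+\tfrac{n-m}{n-1}S_3(t).
\]

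\emph{Assembling the terms.} Write $\frac{m-1}{n-1}=\frac mn-\frac{n-m}{n(n-1)}$. Then the third term becomes
\[
\tfrac nm S_2(t)+\bigl(\tfrac nm\bigr)^2\tfrac{n-m}{n-1}\Bigl(S_3(t)-\tfrac{S_2(t)}n\Bigr).
\]
Adding the first term $C_{n,m}(t)$ and the cross term $-2\frac nm S_2(t)$ yields \eqref{eq:error-identity}.

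I expect no real obstacle. The only points needing care are the independence justifications: $A$ is independent of the clocks, and the label at $x$ is red with the stated conditional probability. Both follow from choosing $A$ uniformly and independently of the shuffle.
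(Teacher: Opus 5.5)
Your proposal is correct and follows the paper's proof step for step. You use the same three-term expansion, identify the diagonal term with $C_{n,m}(t)$ via the conditional independence of the two copies, and handle the cross term via independence of $A$ from the clocks. The third term rests on the same computation $\Prob(x\in R_t\mid i\in A)=\alpha+(1-\alpha)p_t(i,x)$ with $\alpha=(m-1)/(n-1)$, and your rewriting of $\alpha$ simply spells out the paper's ``combining the three terms.''
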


\begin{proof}
Expand the square in \eqref{eq:error-definition}. We evaluate the three resulting terms separately.

For the term containing $K_t(i,x)^2$, conditional independence of $X_t$ and $Y_t$ yields
\[
\E\left[\frac1m\sum_{i\in A}\sum_x K_t(i,x)^2\right]
=
C_{n,m}(t).
\]

For the cross term, the random set $A$ is independent of the Poisson clocks. Conditional on $i\in A$, averaging $K_t(i,x)$ over the black trajectories therefore recovers the unconditional 1-card transition probability $p_t(i,x)$. Translation invariance then yields
\[
\E\left[\frac1m\sum_{i\in A}\sum_xK_t(i,x)p_t(i,x)\right]
=
S_2(t).
\]
Thus, the cross term equals $-2(n/m)S_2(t)$.

It remains to evaluate the squared reference term. Set $\alpha=(m-1)/(n-1)$. Fix $i,x\in\ZZ/n\ZZ$, and condition on $i\in A$. If card $i$ occupies $x$ at time $t$, then $x\in R_t$. If card $i$ does not occupy $x$, then the label occupying $x$ is one of the other $n-1$ labels and belongs to $A$ with conditional probability $\alpha$. Therefore,
\begin{equation}\label{eq:red-site-probability}
\Prob(x\in R_t\mid i\in A)
=
\alpha+(1-\alpha)p_t(i,x).
\end{equation}
Using \eqref{eq:red-site-probability} and translation invariance, the squared reference term equals
\[
\left(\frac nm\right)^2
\left[\alpha S_2(t)+(1-\alpha)S_3(t)\right].
\]
Combining the three terms yields \eqref{eq:error-identity}.
\end{proof}

\subsection{The Inverse-Generator Norm}

Let
\[
\langle f,g\rangle_{\varpi_{n,m}}
=
\E_{\varpi_{n,m}}[fg]
\]
for functions on $\Omega_{n,m}$. Since $L_{n,m}$ is the generator of a finite irreducible reversible Markov chain, the operator $-L_{n,m}$ is positive definite on the subspace of functions with $\varpi_{n,m}$-mean $0$. For such a function $g$, define $(-L_{n,m})^{-1}g$ to be the unique mean-0 function $u$ satisfying
$-L_{n,m}u=g$.
Define
\begin{equation}\label{eq:hminus1-definition}
\|g\|_{-1}^2
=
\langle g,(-L_{n,m})^{-1}g\rangle_{\varpi_{n,m}}
\end{equation}
and
\[
\mathcal D_{n,m}(f)
=
\langle f,-L_{n,m}f\rangle_{\varpi_{n,m}}.
\]
Reversibility implies $\langle f,-L_{n,m}g\rangle_{\varpi_{n,m}}=\langle -L_{n,m}f,g\rangle_{\varpi_{n,m}}$. Hence, $-L_{n,m}$ has an orthonormal eigenbasis on the mean-0 subspace. Expanding in this eigenbasis and completing the square yields the variational identity
\begin{equation}\label{eq:variational}
\|g\|_{-1}^2
=
\sup_f\left\{2\langle g,f\rangle_{\varpi_{n,m}}-\mathcal D_{n,m}(f)\right\},
\end{equation}
where the supremum may be taken over all real-valued functions on $\Omega_{n,m}$ because adding a constant to $f$ changes neither term. The same spectral decomposition yields the identity
\begin{equation}\label{eq:correlation-representation}
\|g\|_{-1}^2
=
\int_0^\infty
\langle g,e^{tL_{n,m}}g\rangle_{\varpi_{n,m}}\,\mathrm{d}t.
\end{equation}
If the two-copy process starts from its stationary law $\varpi_{n,m}$, then the integrand in \eqref{eq:correlation-representation} is $\E[g(R_0,X_0,Y_0)g(R_t,X_t,Y_t)]$. Thus, \eqref{eq:correlation-representation} expresses $\|g\|_{-1}^2$ as the time integral of this expectation. We use only the finite-state identities \eqref{eq:variational} and \eqref{eq:correlation-representation}; for the broader role of inverse-generator norms in the study of additive functionals of reversible processes, see the work of Kipnis and Varadhan \cite{KV}.

For $(R,X,Y)\in\Omega_{n,m}$, define
\begin{equation}\label{eq:source}
b(R,X,Y)
=
\1_{\{X=Y\}}
\left(
\1_{\{X-1\in R\}}+\1_{\{X+1\in R\}}-\frac{2(m-1)}{n-1}
\right)
\end{equation}
and set
\begin{equation}\label{eq:Rnm-definition}
\mathscr R_{n,m}=\|b\|_{-1}^2.
\end{equation}
The function $b$ has $\varpi_{n,m}$-mean $0$. Indeed, conditional on $X=Y$, the set $R\setminus\{X\}$ is uniformly distributed among the $(m-1)$-element subsets of the remaining $n-1$ vertices, so the conditional expected number of red neighbors of $X$ is $2(m-1)/(n-1)$. The function $b$ is identically 0 when $m=1$ or $m=n$.

\begin{proposition}\label[proposition]{prop:integrated}
For every $1\leq m\leq n$, we have
\begin{equation}\label{eq:integrated-error}
\int_0^\infty\mathcal E_{n,m}(t)\,\mathrm{d}t
=
-\frac{(n+1)(n-m)}{24m^2}
+\frac{(n-1)^2}{4m}\mathscr R_{n,m}
+\left(\frac nm\right)^2\frac{n-m}{n-1}B_n.
\end{equation}
\end{proposition}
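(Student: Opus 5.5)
The plan is to integrate \eqref{eq:error-identity} from \cref{lem:error-identity} over $t\in[0,\infty)$. The last term integrates to $(n/m)^2\frac{n-m}{n-1}B_n$ by the definition \eqref{eq:B}. It then remains to show
\[
\int_0^\infty\Bigl(C_{n,m}(t)-\frac nm S_2(t)\Bigr)\,\mathrm dt=-\frac{(n+1)(n-m)}{24m^2}+\frac{(n-1)^2}{4m}\mathscr R_{n,m}.
\]
I will split this as $\int(C_{n,m}-\frac1m)\,\mathrm dt-\frac nm\int(S_2-\frac1n)\,\mathrm dt$. Both integrands decay exponentially, because $\varpi_{n,m}(X=Y)=1/m$ and $S_2(t)\to1/n$ by the estimates in \cref{lem:heat}.

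\emph{The one-card term.} By Parseval applied to \eqref{eq:fourier}, $S_2(t)-\frac1n=\frac1n\sum_{j=1}^{n-1}e^{-2\lambda_{n,j}t}$. Integrating gives $\frac1{2n}\sum_{j\ge1}\lambda_{n,j}^{-1}$, and the classical identity $\sum_{j=1}^{n-1}\bigl(4\sin^2(\pi j/n)\bigr)^{-1}=(n^2-1)/12$ makes this term explicit.

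\emph{The two-copy term.} Let $D=\{X=Y\}$ and $g=\1_D-\frac1m$. The initial law of the two-copy process is $\varpi_{n,m}$ conditioned on $D$, which has density $m\1_D$. Hence $C_{n,m}(t)=m\langle\1_D,e^{tL_{n,m}}\1_D\rangle_{\varpi_{n,m}}$, and \eqref{eq:correlation-representation} gives
\[
\int_0^\infty\Bigl(C_{n,m}-\frac1m\Bigr)\,\mathrm dt=m\|g\|_{-1}^2.
\]
To compute $\|g\|_{-1}^2$, I will track the difference $X-Y$. Off the diagonal, each coordinate moves to each neighboring vertex at rate $1$: by transport if the neighbor is empty, and by a red-red move if it is red. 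Transport moves both coordinates together only when $X=Y$. Consequently, for $\psi$ a symmetric function of the cyclic difference $d=X-Y$, we get $L_{n,m}\psi=2\Delta\psi(d)$ when $d\ne0$. At $d=0$ we get $L_{n,m}\psi=2N(X)(\psi(1)-\psi(0))$, where $N(X)=\1_{\{X-1\in R\}}+\1_{\{X+1\in R\}}$.

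I will write $N(X)=\frac{2(m-1)}{n-1}+b$ on $D$. Then $\psi$ will be chosen as the explicit solution (quadratic in $d$) of the cycle Poisson equation for the averaged walk. This walk has rate $2$ per direction off $0$ and rate $2(m-1)/(n-1)$ per direction at $0$, and $\psi$ is chosen so that
\[
-L_{n,m}\psi=a\,g+c\,b,\qquad c=-2(\psi(1)-\psi(0)),
\]
for an explicit constant $a$. This gives $g=a^{-1}(-L_{n,m}\psi-c\,b)$, and I will expand $\|g\|_{-1}^2=a^{-2}\|{-L_{n,m}\psi}-c\,b\|_{-1}^2$. The cross term is $\langle\psi,b\rangle_{\varpi_{n,m}}$. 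Since $b$ vanishes off $D$, this equals $\psi(0)\,\E_{\varpi_{n,m}}[b]=0$, using the mean-zero property of $b$ noted after \eqref{eq:Rnm-definition}. Hence
\[
\|g\|_{-1}^2=a^{-2}\bigl(\langle\psi,-L_{n,m}\psi\rangle+c^2\mathscr R_{n,m}\bigr).
\]
The constants should combine into the coefficient $\frac{(n-1)^2}{4m}$ of $\mathscr R_{n,m}$, and $\langle\psi,-L_{n,m}\psi\rangle$ can be evaluated explicitly.

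The main obstacle is the bookkeeping in this last step. One must solve the Poisson equation on $\ZZ/n\ZZ$ with the defective rate at $0$, and compute the law of $X-Y$ under $\varpi_{n,m}$: it puts mass $1/m$ at $0$ and $(m-1)/(m(n-1))$ at each nonzero difference. The resulting sums of the form $\sum_d d(n-d)$ must then be checked to combine with the one-card term into exactly $-\frac{(n+1)(n-m)}{24m^2}$. As a consistency check, both sides vanish appropriately at $m=n$, where $b\equiv0$.
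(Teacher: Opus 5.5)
Your proposal is correct and follows essentially the same route as the paper: you write $\int_0^\infty(C_{n,m}-\tfrac1m)\,\mathrm dt=m\|\1_D-\tfrac1m\|_{-1}^2$, solve for a quadratic corrector in the cyclic separation with $-L_{n,m}\psi=(\1_D-\tfrac1m)+\tfrac{n-1}{2m}b$ (your $a=1$, $c=\tfrac{n-1}{2m}$), and use the orthogonality of $b$ to functions of $X-Y$ to remove the cross term. The only cosmetic difference is that you evaluate $\int_0^\infty(S_2-\tfrac1n)\,\mathrm dt$ via Parseval and the cosecant-sum identity, whereas the paper uses the one-card corrector $u$; both give $(n^2-1)/(24n)$, and the remaining bookkeeping produces exactly $-\frac{(n+1)(n-m)}{24m^2}$.
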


\begin{proof}
Let
$D=\1_{\{X=Y\}}$ and $v=D-1/m$.
Under $\varpi_{n,m}$, the event $\{D=1\}$ has probability $1/m$. The initial distribution used in \eqref{eq:coincidence-probability} is precisely $\varpi_{n,m}$ conditioned on this event. Consequently,
\begin{equation}\label{eq:coincidence-correlation}
C_{n,m}(t)-\frac1m
=
m\langle v,e^{tL_{n,m}}v\rangle_{\varpi_{n,m}}.
\end{equation}
Integrating \eqref{eq:coincidence-correlation} and using \eqref{eq:correlation-representation} yields the identity
\begin{equation}\label{eq:coincidence-integral-norm}
\int_0^\infty\left(C_{n,m}(t)-\frac1m\right)\mathrm{d}t
=
m\|v\|_{-1}^2.
\end{equation}

We now separate from $\|v\|_{-1}^2$ the contribution depending only on the separation of the two auxiliary coordinates. For a state $(R,X,Y)$, let $d\in\{0,1,\ldots,n-1\}$ be the representative of $X-Y$ modulo $n$, and let $w=\1_{\{X-1\in R\}}+\1_{\{X+1\in R\}}$. For a function $\phi\colon\ZZ/n\ZZ\to\mathbb R$, write
\[
\Delta\phi(r)=\phi(r-1)+\phi(r+1)-2\phi(r),
\]
with the arguments interpreted modulo $n$. We also write $\phi(d)$ for the function $(R,X,Y)\mapsto\phi(X-Y)$ on $\Omega_{n,m}$. The transition rules of the two-copy process imply the identity
\begin{equation}\label{eq:separation-generator}
L_{n,m}\phi(d)
=
2\Delta\phi(d)+(w-2)D\,\Delta\phi(0).
\end{equation}
To verify \eqref{eq:separation-generator}, first suppose $X\neq Y$. Each coordinate then changes the separation by one step at rate $1$ in each direction, regardless of whether the neighboring vertex is red or black, so the contribution is $2\Delta\phi(d)$. Now suppose $X=Y$. For each black neighbor, the corresponding edge transports both coordinates together and leaves $d$ unchanged. The term $2\Delta\phi(0)$ treats that edge as if the two coordinates moved separately, so one copy of $\Delta\phi(0)$ must be subtracted for each black neighbor. Since there are $2-w$ black neighbors, the correction is $(w-2)D\Delta\phi(0)$.

Under $\varpi_{n,m}$, we have 
\[
\Prob(d=0)=\frac1m\quad\text{and}\quad
\Prob(d=r)=\frac{m-1}{m(n-1)}.
\] for $r\neq 0$. 
For $0\leq r\leq n-1$, define 
\begin{equation}\label{eq:corrector}
\phi(r)
=
\frac{n(m-1)(n+1)}{24m^2}-\frac{r(n-r)}{4m}.
\end{equation}
A direct summation using the preceding distribution of $d$ shows that $\E_{\varpi_{n,m}}[\phi(d)]=0$. Its cyclic second difference satisfies
\[
\Delta\phi(r)=\frac1{2m}
\quad\text{for }r\neq 0\quad\text{and}\quad
\Delta\phi(0)=-\frac{n-1}{2m}.
\]
Substituting these values into \eqref{eq:separation-generator} yields the identity
\begin{equation}\label{eq:corrector-identity}
-L_{n,m}\phi
=
v+\frac{n-1}{2m}b.
\end{equation}

The function $b$ is orthogonal in $L^2(\varpi_{n,m})$ to every function of $d$. Indeed, $b$ vanishes unless $d=0$, and its conditional mean given $d=0$ is $0$. In particular,
$\langle b,\phi\rangle_{\varpi_{n,m}}=0$.
Write $a=(n-1)/(2m)$. We deduce from  \eqref{eq:corrector-identity} that $v=(-L_{n,m})\phi-ab$. Using self-adjointness of $-L_{n,m}$ and the preceding orthogonality, we find that 
\begin{align*}
\|v\|_{-1}^2
&=\langle v,(-L_{n,m})^{-1}v\rangle_{\varpi_{n,m}}\\
&=\langle v,\phi\rangle_{\varpi_{n,m}}+a^2\langle b,(-L_{n,m})^{-1}b\rangle_{\varpi_{n,m}}\\
&=\langle\phi,v\rangle_{\varpi_{n,m}}+\frac{(n-1)^2}{4m^2}\mathscr R_{n,m}.
\end{align*}
Since $v=D-1/m$ and $\phi$ has mean $0$,
we have \[
\langle\phi,v\rangle_{\varpi_{n,m}}
=
\E_{\varpi_{n,m}}[\phi D]
=
\frac{\phi(0)}m.
\]
Combining this identity with \eqref{eq:coincidence-integral-norm} and the value of $\phi(0)$ from \eqref{eq:corrector} yields
\begin{equation}\label{eq:return-integral}
\int_0^\infty\left(C_{n,m}(t)-\frac1m\right)\mathrm{d}t
=
\frac{n(m-1)(n+1)}{24m^2}
+\frac{(n-1)^2}{4m}\mathscr R_{n,m}.
\end{equation}

We also need the corresponding integral for the 1-card walk. Reversibility and the semigroup property imply that 
\[
S_2(t)=\sum_xp_t(0,x)^2=p_{2t}(0,0).
\]
The function $u$ defined by 
\[
u(r)=\frac{n^2-1}{24n}-\frac{r(n-r)}{4n}
\]
has mean $0$ under the uniform probability measure on $\ZZ/n\ZZ$ and satisfies
\[
-2\Delta u=\1_{\{0\}}-\frac1n.
\]
Since $u$ has mean $0$, spectral decomposition of $-2\Delta$ yields
\[
u(0)=\int_0^\infty \left(e^{2t\Delta}\left(\1_{\{0\}}-\frac1n\right)\right)(0)\,\mathrm{d}t.
\]
Because the expression inside the integral is $p_{2t}(0,0)-1/n=S_2(t)-1/n$, this identity yields
\begin{equation}\label{eq:rw-integral}
\int_0^\infty\left(S_2(t)-\frac1n\right)\mathrm{d}t
=
\frac{n^2-1}{24n}.
\end{equation}

Finally, subtract the stationary terms in \eqref{eq:error-identity}, and integrate over $t\in[0,\infty)$. The integrals converge because both finite-state chains are irreducible and reversible, while the term involving $S_3-S_2/n$ is integrable by \cref{lem:heat}. Substituting \eqref{eq:return-integral}, \eqref{eq:rw-integral}, and the definition of $B_n$ yields \eqref{eq:integrated-error}; the first two deterministic terms combine according to the identity
\[
\frac{n(m-1)(n+1)}{24m^2}
-\frac nm\frac{n^2-1}{24n}
=
-\frac{(n+1)(n-m)}{24m^2}. \qedhere
\]
\end{proof}

\section{A Uniform Local Poincar\'e Inequality}\label{sec:local-gap}

The remaining task in the upper-bound argument is to estimate $\mathscr R_{n,m}$. We begin with a Poincar\'e inequality for a path version of the two-copy process. We define this path process explicitly because it will be used conditionally inside spatial blocks in \cref{sec:blocks}.

Fix integers $s\geq1$ and $1\leq k\leq s$. Let
\[
\Omega^{\mathrm p}_{s,k}
=
\{(R,x,y):R\subseteq[s],\ |R|=k,\ x,y\in R\}.
\]
For a path edge $e=\{j,j+1\}$, let $\tau_e$ denote the transposition of $j$ and $j+1$. Define a continuous-time Markov chain on $\Omega^{\mathrm p}_{s,k}$ as follows. If $|e\cap R|=1$, then at rate $1$ replace $(R,x,y)$ by
\[
(\tau_e(R),\tau_e(x),\tau_e(y)).
\]
If $e\subseteq R$, then $R$ remains fixed; at rate $1$, the $x$-coordinate crosses $e$ when $x\in e$, and independently, at rate $1$, the $y$-coordinate crosses $e$ when $y\in e$. If $e\cap R=\varnothing$, the edge produces no transition. Let $L^{\mathrm p}_{s,k}$ be the generator of this chain. Every transition has a reverse transition with the same rate, so the uniform probability measure $\nu_{s,k}$ on $\Omega^{\mathrm p}_{s,k}$ is reversible. The same argument used for the cycle two-copy process shows that the path process is irreducible. Let
\[
\mathcal D^{\mathrm p}_{s,k}(f)
=
\langle f,-L^{\mathrm p}_{s,k}f\rangle_{\nu_{s,k}}
\]
be its Dirichlet form.

\begin{lemma}\label[lemma]{lem:local-gap}
There is an absolute constant $K_1>0$ such that, for every $s\geq1$, every $1\leq k\leq s$, and every real-valued function $f$ on $\Omega^{\mathrm p}_{s,k}$,
\begin{equation}\label{eq:local-gap}
\Var_{\nu_{s,k}}(f)
\leq
K_1s^2\mathcal D^{\mathrm p}_{s,k}(f).
\end{equation}
\end{lemma}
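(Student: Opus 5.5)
We would prove \eqref{eq:local-gap} by comparison with the colored exclusion chain of \eqref{eq:colored-poincare}. We split $\Omega^{\mathrm p}_{s,k}$ into the two sectors $\Omega^{=}=\{x=y\}$ and $\Omega^{\neq}=\{x\neq y\}$. Each sector is in bijection with a set of colorings of $\mathcal P_s$.
\begin{itemize}
\item A state $(R,x,x)$ becomes a coloring with $s-k$ black vertices, $k-1$ plain red vertices, and one ``double'' vertex at $x$.
\item A state $(R,x,y)$ with $x\neq y$ becomes a coloring with $s-k$ black, $k-2$ plain red, one ``$x$'' vertex and one ``$y$'' vertex.
\end{itemize}
Under these bijections the uniform measure $\nu_{s,k}$ restricted to a sector becomes the uniform measure $\nu_{s,\mathbf k}$ for the corresponding color counts. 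We then use the decomposition
\[
\Var_{\nu_{s,k}}(f)=\E\bigl[\Var(f\mid\text{sector})\bigr]+\Var\bigl(\E[f\mid\text{sector}]\bigr),
\]
and bound the two terms separately by $O(s^2)\mathcal D^{\mathrm p}_{s,k}(f)$.

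\emph{Within-sector term.} Most colored-chain moves are literally two-copy moves of rate $1$.
\begin{itemize}
\item A swap across an edge with exactly one red endpoint (the red endpoint possibly carrying marks) is a two-copy transport move.
\item A swap of a mark with a plain red across a red--red edge is a single-coordinate crossing.
\end{itemize}
The remaining colored moves are simulated by two-copy paths of bounded length.
\begin{itemize}
\item Swapping adjacent ``$x$'' and ``$y$'' vertices: move $x$ onto $y$'s site (entering $\Omega^{=}$), then move $y$ back to $x$'s old site. This has length $2$.
\item Moving the double mark across a red--red edge: move $x$ across, then $y$ across. This has length $2$.
\end{itemize}
Each two-copy transition is used by at most $O(1)$ of these paths, and all states have equal mass. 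The standard Cauchy--Schwarz path comparison (as in the proof of \cref{lem:lsi}) therefore gives $\mathcal D^{\mathrm{col}}(f|_{\text{sector}})\le C\,\mathcal D^{\mathrm p}_{s,k}(f)$ up to the sector mass normalizations. Applying \eqref{eq:colored-poincare} in each sector then bounds $\E[\Var(f\mid\text{sector})]$ by $Cs^2\mathcal D^{\mathrm p}_{s,k}(f)$.

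\emph{Between-sector term.} Write $p=\nu(\Omega^=)=1/k$. Then the between-sector term is $p(1-p)(\bar f_=-\bar f_{\neq})^2$, where $\bar f_=$ and $\bar f_{\neq}$ are the sector means. We would represent $\bar f_{\neq}$ as an average over a splitting map. From $(R,x,x)$, choose a red neighbor $z$ of $x$ if one exists, and move $y$ onto $z$. This uses a single two-copy edge and covers the states in $\Omega^{\neq}$ with $|x-y|=1$.

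Adjacent-mark states are not representative of all of $\Omega^{\neq}$. To bridge the gap we insert the within-sector Poincar\'e bound already proved:
\[
|\bar f_=-\bar f_{\neq}|\le |\bar f_=-\bar f_{\mathrm{adj}}|+|\bar f_{\mathrm{adj}}-\bar f_{\neq}|,
\]
where $\bar f_{\mathrm{adj}}$ is the average over states with adjacent marks. The second difference is controlled by Cauchy--Schwarz against $\Var(f\mid\Omega^{\neq})$, which costs a factor $\nu(\Omega^{\neq})/\nu(\text{adjacent})\asymp k$. This factor should be compensated by $p=1/k$.

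The first difference is harder, since $\bar f_=$ averages over all states of $\Omega^{=}$, including those where $x$ has no red neighbor, and these are not split directly. There we would first move the double mark by colored moves in $\Omega^{=}$ next to a red vertex, again using the within-sector bound, and then split.

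I expect this between-sector estimate to be the main obstacle. The difficulty is to make the weights $1/k$, $1-1/k$ and the density of adjacent configurations combine into a bound $O(s^2)$ uniformly in $k$, including the sparse regime $k\ll s$. If the direct bookkeeping fails, the fallback is a canonical-path argument between the sectors: from $(R,x,x)$ to $(R,x,y)$, carry $y$ along a path of length $O(s)$ (transporting through black vertices via exchange moves). We would then verify that the edge congestion is $O(s)$ times the ratio of sector masses, giving the $s^2$ factor.
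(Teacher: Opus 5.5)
\textbf{Verdict.} Your within-sector step is the same as the paper's: the colored moves you list are exactly the shared transpositions, each realized by at most two two-copy moves, and \eqref{eq:colored-poincare} then controls $\E[\Var(f\mid D)]$, where $D=\1_{\{x=y\}}$. The gap is in the between-sector term, where your primary bookkeeping does not close.

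\textbf{Where the primary route fails.} Within $\{x\neq y\}$, the expected number of ordered adjacent pairs in $R$ is $2(s-1)\cdot\frac{k(k-1)}{s(s-1)}=\frac{2k(k-1)}{s}$, out of $k(k-1)$ ordered pairs. So adjacent-mark states have conditional mass $2/s$, not order $1/k$. Hence the transfer $|\bar f_{\mathrm{adj}}-\bar f_{\neq}|^2\le \Var(f\mid x\neq y)/\nu_{s,k}(\mathrm{adj}\mid x\neq y)$ costs a factor $s/2$. Combined with $\Var(f\mid x\neq y)\lesssim s^2\mathcal D^{\mathrm p}_{s,k}(f)$ and the weight $p(1-p)\le 1/k$, you only get $O(s^3/k)\,\mathcal D^{\mathrm p}_{s,k}(f)$. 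The $\{x=y\}$ side fails the same way. The states in which $x$ has a red neighbor have conditional mass of order $\min(1,k/s)$. Meanwhile, the within-sector bound only gives $\Var(f\mid x=y)\lesssim ks^2\mathcal D^{\mathrm p}_{s,k}(f)$, because that sector has mass $1/k$. So again you get $O(s^3/k)$. This is not uniform in $k$ in the sparse regime $k\ll s$. The lemma must cover that regime, since it is applied in \cref{sec:blocks} with arbitrary $K_B$.

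\textbf{Your fallback.} The canonical-path idea is the right one and is what the paper does, but you only announce it. The mechanism you describe also does not work literally. First, $y$ cannot separate from $x$ unless $x$ currently has a red neighbor. Second, carrying $y$ through black vertices by transport moves changes $R$. So you must say how the path returns to the same $R$ and how its congestion is counted.

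\textbf{What the paper does.} It first applies Jensen to the coupling in which $R$ is uniform, $x$ is uniform in $R$, and $y$ is uniform in $R\setminus\{x\}$. This gives $\Var(\E[f\mid D])\le\frac{1}{N_{s,k}k^3}\sum_{R}\sum_{x\neq y}\bigl(f(R,x,x)-f(R,x,y)\bigr)^2$ with $N_{s,k}=\binom sk$; this is where $p(1-p)$ is absorbed. For $x<y$, the path has three stages.
\begin{enumerate}
\item Apply shared transpositions on the edges with left endpoints $y-1,\ldots,x+1$. These bring the red occupation at $y$ to $x+1$ without moving either coordinate.
\item Move the second coordinate from $x$ to $x+1$.
\item Undo the shared transpositions in reverse order. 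This carries that coordinate to $y$ and restores $R$.
\end{enumerate}
Paths have length at most $2s$, and each oriented transition lies on at most $2s$ of them. Indeed, the first coordinate always equals $x$, there are at most $s$ targets $y$, each edge occurs at most twice per path, and $R$ is recovered from the current red set and the preceding prescribed moves. Since every state has mass $1/(N_{s,k}k^2)$, this gives $\Var(\E[f\mid D])\le(8s^2/k)\,\mathcal D^{\mathrm p}_{s,k}(f)$, uniformly in $k$.
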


\begin{proof}
The assertion is immediate when $s=1$. Assume $s\geq2$.

For a path edge $e$, define the \dfn{shared transposition} of a state $(R,x,y)$ by
\[
(R,x,y)\longmapsto(\tau_e(R),\tau_e(x),\tau_e(y)).
\]
Consider the auxiliary continuous-time Markov chain that applies every nontrivial shared transposition at rate $1$. Let $\mathcal D_{\mathrm{sh}}$ denote its Dirichlet form with respect to the uniform measure $\nu_{s,k}$. If $|e\cap R|=1$, the shared transposition is an actual transition of the path two-copy process. If $e\subseteq R$, the shared transposition can be implemented by at most two successive single-coordinate transitions of the path two-copy process. Applying $(a+b)^2\leq2a^2+2b^2$ to this two-step implementation and then changing variables under the measure-preserving intermediate move yields
\begin{equation}\label{eq:shared-comparison}
\mathcal D_{\mathrm{sh}}(f)
\leq
2\mathcal D^{\mathrm p}_{s,k}(f).
\end{equation}

Let $D=\1_{\{X=Y\}}$. The shared-transposition chain preserves the value of $D$. Conditional on $D=1$, a state can be identified with a coloring of the path having one distinguished red vertex, $k-1$ ordinary red vertices, and $s-k$ black vertices. Under this identification, the shared-transposition chain is exactly the colored path process defined before \eqref{eq:colored-poincare}. Conditional on $D=0$, it is the colored path process with two distinct distinguished red vertices, $k-2$ ordinary red vertices, and $s-k$ black vertices. Therefore, \eqref{eq:colored-poincare} and \eqref{eq:shared-comparison} yield
\begin{equation}\label{eq:within-sector-variance}
\E_{\nu_{s,k}}[\Var(f\mid D)]
\leq
K_2s^2\mathcal D^{\mathrm p}_{s,k}(f)
\end{equation}
for an absolute constant $K_2>0$.

If $k=1$, then $D=1$ almost surely, so \eqref{eq:within-sector-variance} proves the lemma. Assume henceforth that $k\geq2$. By the variance decomposition,
\[
\Var_{\nu_{s,k}}(f)
=
\E_{\nu_{s,k}}[\Var(f\mid D)]
+
\Var_{\nu_{s,k}}(\E[f\mid D]).
\]
It remains to estimate the second term.

Set $N_{s,k}=\binom sk$.
Choose $R$ uniformly among the $k$-element subsets of $[s]$, choose $x$ uniformly from $R$, and then choose $y$ uniformly from $R\setminus\{x\}$. The state $(R,x,x)$ has the conditional law $\nu_{s,k}(\,\cdot\mid D=1)$, while $(R,x,y)$ has the conditional law $\nu_{s,k}(\,\cdot\mid D=0)$. Since $\nu_{s,k}(D=1)=1/k$, Jensen's inequality yields
\begin{equation}\label{eq:sector-coupling}
\Var_{\nu_{s,k}}(\E[f\mid D])
\leq
\frac1{N_{s,k}k^3}
\sum_{\substack{R\subseteq[s]\\|R|=k}}
\sum_{\substack{x,y\in R\\x\neq y}}
\bigl(f(R,x,x)-f(R,x,y)\bigr)^2.
\end{equation}

For each triple $(R,x,y)$ with $x\neq y$, we now specify a path of transitions in the two-copy chain from $(R,x,x)$ to $(R,x,y)$. Suppose first that $x<y$. Apply shared transpositions successively on the edges with left endpoints $y-1,y-2,\ldots,x+1$.
These moves transport the red occupation initially at $y$ to $x+1$ and do not move either auxiliary coordinate, because both coordinates remain at $x$. The vertices $x$ and $x+1$ are then both red. Move only the second auxiliary coordinate from $x$ to $x+1$. Finally, undo the preceding shared transpositions in reverse order. During this reversal, the second auxiliary coordinate is transported from $x+1$ to $y$. The terminal state is $(R,x,y)$. If $y<x$, use the reflected construction.

Every nonidentity step in this prescribed path is an allowed transition of the path two-copy process. Before the single-coordinate move, neither auxiliary coordinate lies on an edge used by a shared transposition. After that move, only the second auxiliary coordinate can lie on such an edge, so each nontrivial shared transposition during the reversal is itself an allowed red-black transport or a single-coordinate move across a red-red edge. The prescribed path has at most $2s$ transitions.

We next bound the congestion of this family of prescribed paths. Fix an oriented transition $z\to z'$ of the path two-copy chain. Along every prescribed path, the first auxiliary coordinate remains equal to its initial value $x$. Hence, $z\to z'$ determines $x$, and there are at most $s$ possible values of the target $y$. For fixed $x$ and $y$, each spatial edge occurs at most twice in the prescribed path. At any specified occurrence, the initial red set $R$ is uniquely determined by the current red set and the preceding prescribed moves. Therefore, counting occurrences, at most $2s$ prescribed paths use the fixed oriented transition.

Applying the Cauchy--Schwarz inequality along each prescribed path in \eqref{eq:sector-coupling}, and then summing with the preceding congestion bound, yields
\[
\Var_{\nu_{s,k}}(\E[f\mid D])
\leq
\frac{4s^2}{N_{s,k}k^3}
\sum_{z,z':\,q(z,z')>0}
\bigl(f(z')-f(z)\bigr)^2,
\]
where $q(z,z')$ denotes the transition rate of the path two-copy process. Every nonzero transition rate equals $1$, and every state has $\nu_{s,k}$-mass $1/(N_{s,k}k^2)$. Consequently, the preceding inequality implies
\begin{equation}\label{eq:between-sector-variance}
\Var_{\nu_{s,k}}(\E[f\mid D])
\leq
\frac{8s^2}{k}\mathcal D^{\mathrm p}_{s,k}(f)
\leq
8s^2\mathcal D^{\mathrm p}_{s,k}(f).
\end{equation}
Combining \eqref{eq:within-sector-variance} and \eqref{eq:between-sector-variance} proves \eqref{eq:local-gap} after increasing the absolute constant $K_1$.
\end{proof}

\section{A Multiscale Bound for the Inverse Generator}\label{sec:blocks}

We now estimate the quantity $\mathscr R_{n,m}$ from \eqref{eq:Rnm-definition}. Recall that the function $b$ in \eqref{eq:source} is supported on states for which the two auxiliary coordinates coincide. On that event, it measures the difference between the number of red neighbors of the common coordinate and its conditional mean. We decompose this local fluctuation across a hierarchy of spatial intervals and use \cref{lem:local-gap} inside each interval.

\begin{proposition}\label[proposition]{prop:resolvent}
There is an absolute constant $K_3>0$ such that, for every $1\leq m\leq n$,
\begin{equation}\label{eq:resolvent}
\mathscr R_{n,m}
\leq
K_3\frac{1-m/n}{m}(1+\log n)^2.
\end{equation}
\end{proposition}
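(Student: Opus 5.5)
The plan is to bound $\mathscr R_{n,m}=\|b\|_{-1}^2$ through the variational identity \eqref{eq:variational}. It suffices to show that for every $f$ on $\Omega_{n,m}$,
\[
\langle b,f\rangle_{\varpi_{n,m}}^2\le K\,\frac{1-m/n}{m}(1+\log n)^2\,\mathcal D_{n,m}(f),
\]
since then $2\langle b,f\rangle-\mathcal D_{n,m}(f)\le \langle b,f\rangle^2/\mathcal D_{n,m}(f)$ gives \eqref{eq:resolvent}. The cases $m=1$ and $m=n$ are trivial because $b\equiv0$, so I assume $2\le m\le n-1$. Write $\rho=(m-1)/(n-1)$.

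\emph{Step 1 (telescoping over scales).} On $\{X=Y\}$, I would write $b$ as a telescoping sum over dyadic scales $\ell_j=2^j$, $0\le j\le J=\lceil\log_2 n\rceil$. Let $\rho_j(R,X)$ be the fraction of red vertices among the vertices of $B_j(X)\setminus\{X\}$, where $B_j(X)$ is an interval of length about $2\ell_j$ containing $X$. Take $\rho_0$ to be half of $\1_{\{X-1\in R\}}+\1_{\{X+1\in R\}}$, and take $\rho_J=\rho$ for the full cycle. Then
\[
b=\sum_{j<J}g_j,\qquad g_j=2\,\1_{\{X=Y\}}(\rho_j-\rho_{j+1}).
\]
To avoid boundary effects, I would average the block position over random dyadic shifts, so that the block containing $X$ at scale $j+1$ is a fixed interval $\Lambda$ of size $s\asymp\ell_{j+1}$ not determined by $X$ alone. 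The key property is that $g_j$ has conditional mean zero given the following data:
\begin{itemize}
\item the configuration outside $\Lambda$;
\item the number $k$ of red vertices in $\Lambda$;
\item the event that $X=Y\in\Lambda$.
\end{itemize}
Indeed, given this data, the restriction to $\Lambda$ is uniform on the path two-copy space $\Omega^{\mathrm p}_{s,k}$.

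\emph{Step 2 (one scale via \cref{lem:local-gap}).} By this conditional centering,
\[
\langle g_j,f\rangle=\langle g_j,f-\E[f\mid\mathcal B_\Lambda]\rangle
\le\|g_j\|_2\,\E\bigl[\Var(f\mid\mathcal B_\Lambda)\bigr]^{1/2},
\]
where $\mathcal B_\Lambda$ is the sigma-algebra generated by the data listed in Step 1. Applying \cref{lem:local-gap} conditionally inside $\Lambda$ bounds the conditional variance by $K_1s^2$ times the part of $\mathcal D_{n,m}(f)$ coming from edges inside $\Lambda$. One has to convert between the local uniform measure, in which $\{X=Y\}$ has mass $1/k$, and $\varpi_{n,m}$, in which it has mass $1/m$. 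This conversion supplies a factor of order $k/m\asymp\rho s/m$. On the other side, $\|g_j\|_2^2$ is of order $\tfrac1m\cdot\rho(1-\rho)/\ell_j$, by the hypergeometric variance of a block density given the next-larger block density. The aim is to combine these into a per-scale bound
\[
\langle g_j,f\rangle^2\le K\,\frac{1-\rho}{m}(1+\log n)\,\mathcal D_j(f),
\]
where $\mathcal D_j$ is the Dirichlet form restricted to scale-$j$ blocks.

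\emph{Step 3 (summing scales).} Each edge lies in $O(1)$ blocks at each scale, after the shift averaging. Hence $\sum_j\mathcal D_j(f)\le K(1+\log n)\,\mathcal D_{n,m}(f)$, and I would combine the scales by Cauchy--Schwarz over the $O(\log n)$ values of $j$:
\[
\Bigl(\sum_j\langle g_j,f\rangle\Bigr)^2\le\Bigl(\sum_j a_j\Bigr)\Bigl(\sum_j \langle g_j,f\rangle^2/a_j\Bigr)
\]
with suitable weights $a_j$. This should produce the $(1+\log n)^2$ in \eqref{eq:resolvent}, together with the factor $(1-m/n)/m$ coming from $1-\rho$.

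\emph{Main obstacle.} The crude bookkeeping in Step 2 gives a cost that grows like $s$ per scale: the product $\|g_j\|^2\cdot s^2\cdot(\text{measure conversion})$ is not scale-invariant unless the $1/k$ versus $1/m$ conversion is used carefully. I expect that getting a scale-independent per-scale constant is the hardest point. My first attempt would exploit the fact that on $\{X=Y\}$ the coincident coordinate is spread over the $k$ red sites of $\Lambda$. The localized measure then has $\{X=Y\}$ weight $1/k\asymp 1/(\rho s)$, and this should cancel the extra $s$ against the density variance $\rho(1-\rho)/s$. If that cancellation falls short, my fallback is to write $\rho_j-\rho_{j+1}$ explicitly as a sum of red--black exchange gradients within $\Lambda$, and to apply the path-congestion argument of \cref{lem:local-gap} directly to these gradients.
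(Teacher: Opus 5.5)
\textbf{Verdict: there is a genuine gap.} Your overall architecture (variational formula, multiscale telescoping with blockwise conditional centering, \cref{lem:local-gap} inside blocks, $O(\log n)$ scales) matches the paper's. The problem is the per-scale estimate you flag as the main obstacle, and neither remedy you propose closes it.

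Your term $g_j=2\,\1_{\{X=Y\}}(\rho_j-\rho_{j+1})$ keeps, at every scale, the coincidence indicator and the identity of the subblock containing $X$. Under $\varpi_{n,m}$ one has $\Prob(X=Y=z)=1/(nm)$, and the density difference has variance of order $\rho(1-\rho)/s$. So the part $g_j^\Lambda$ supported in a block $\Lambda$ of size $s$ has $\|g_j^\Lambda\|_2^2\asymp\rho(1-\rho)/(nm)$.

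There is no conversion factor to exploit. Each fiber law is the normalized restriction of the uniform measure $\varpi_{n,m}$, and $\nu_{s,k}(D=1)=1/k$. Applying \cref{lem:local-gap} on each fiber therefore gives exactly $\E_{\varpi_{n,m}}[\1_{\{X=Y\in\Lambda\}}\Var(f\mid\mathcal B_\Lambda)]\le K_1s^2\mathcal D_\Lambda(f)$, where $\mathcal D_\Lambda$ is the part of $\mathcal D_{n,m}$ from edges inside $\Lambda$. The weight $1/k$ of $\{X=Y\}$ is already inside $\|g_j^\Lambda\|_2$.

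Summing over the $n/s$ blocks, Step 2 yields at best $\langle g_j,f\rangle^2\lesssim (s\rho(1-\rho)/m)\,\mathcal D_{n,m}(f)$. This grows linearly in $s$ and is of order $1-\rho$ at the top scale. So you only get $\mathscr R_{n,m}\lesssim 1-\rho$, which loses a factor of order $m$ and would make the sum in \cref{lem:total-error} of order $n^3$.

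The fallback fails for the same reason. Red--black exchanges preserve $\{X=Y\}$. Within that sector, $\rho_j-\rho_{j+1}$ (a density imbalance times the side containing $X$) relaxes only on time scale $s^2$, so a flow built from such gradients is again charged about $s^2\|g_j^\Lambda\|_2^2$. The saving has to come from the two coordinates separating, and neither argument sees that.

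The missing idea is to make each scale's term a conditional expectation that forgets the fine information, and to defer that information to smaller blocks. The paper conditions on $\mathcal F_B$, which records $R\cap B^c$, the positions of coordinates outside $B$, and otherwise only that both coordinates lie in $B$. It does not record whether they coincide or where in $B$ they sit. This gives $F_B=\E[g_{M,B}\mid\mathcal F_B]=\1_{A_B}\frac{c_B}{sK_B}\bigl(\frac{K_B-1}{s-1}-\alpha\bigr)$.

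The factor $\Prob(X=Y\mid A_B,K_B)=1/K_B$ now enters squared, so $\E[F_B^2]\lesssim\rho(1-\rho)/(m^2s)$. This is smaller than $\|g_j^\Lambda\|_2^2$ by a factor of order $\rho s$ (the typical red count in $B$) whenever $\rho s\ge1$, which exactly cancels your growth in $s$. The paper then does three things:
\begin{enumerate}
\item it telescopes $U_B=F_{B_1}+F_{B_2}-F_B$ down a binary tree;
\item it applies the full two-sector \cref{lem:local-gap} on $A_B$;
\item it obtains $\sum_B|B|^2\E[U_B^2]\lesssim(1-\rho)/m$ on every level.
\end{enumerate}
The triangle inequality over $O(\log n)$ levels then gives $(1+\log n)^2$.

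A separate, smaller issue: for a fixed shift, your bottom term is not centered when $X$ is at an end of its smallest block, because $\rho_0$ then involves a neighbor outside that block. The paper avoids this by splitting $b$ into three matching sums and never cutting a matching edge.
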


We first state a sampling estimate. For integers $n\geq1$, $0\leq m\leq n$, and $0\leq s\leq n$, we say that a random variable $K$ has the \dfn{hypergeometric distribution with parameters $(n,m,s)$} if, for a fixed $s$-element subset $B$ of an $n$-element set and a uniformly random $m$-element subset $R$, we have $K=|R\cap B|$.
Equivalently,
\[
\Prob(K=r)
=
\frac{\binom sr\binom{n-s}{m-r}}{\binom nm}
\]
for every integer $r$ for which the binomial coefficients are nonzero.

\begin{lemma}\label[lemma]{lem:hypergeometric}
Suppose $n\geq3$, $1\leq m\leq n$, and $2\leq s\leq n$. Let $K$ have the hypergeometric distribution with parameters $(n,m,s)$, and let $\rho=m/n$ and $\alpha=(m-1)/(n-1)$.
Then
\begin{equation}\label{eq:hypergeometric}
\E\left[
\left(\frac{K-1}{s-1}-\alpha\right)^2
\1_{\{K\geq1\}}
\right]
\leq
\frac{8\rho(1-\rho)}s.
\end{equation}
\end{lemma}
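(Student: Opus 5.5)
The plan is to reduce the left side of \eqref{eq:hypergeometric} to the mean and variance of $K$, after recentering $K$ at its mean $s\rho$. The first step is an exact identity for the deterministic offset. A direct computation should give
\[
s\rho-1-\alpha(s-1)=-\frac{(n-s)(n-m)}{n(n-1)}=-c,\qquad c:=\frac{(n-s)(1-\rho)}{n-1}.
\]
Consequently
\[
\frac{K-1}{s-1}-\alpha=\frac{X-c}{s-1},\qquad X:=K-s\rho .
\]
Since $s\leq n$ and $\rho\leq1$, we have $c\geq0$, and also $c\leq 1-\rho$.

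Next I expand the square against the indicator:
\[
\E\bigl[(X-c)^2\1_{\{K\geq1\}}\bigr]\leq \E[X^2]-2c\,\E\bigl[X\1_{\{K\geq1\}}\bigr]+c^2\,\Prob(K\geq1).
\]
Here the first term uses $X^2\1_{\{K\geq1\}}\leq X^2$. The key observation is that the cross term has a favourable sign. Because $\E[X]=0$ and $X=-s\rho$ on $\{K=0\}$, we get
\[
\E\bigl[X\1_{\{K\geq1\}}\bigr]=-\E\bigl[X\1_{\{K=0\}}\bigr]=s\rho\,\Prob(K=0)\geq0,
\]
and since $c\geq0$ the cross term can be dropped. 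This sign argument is what avoids the lossy bound $(a+b)^2\le 2a^2+2b^2$. That cruder bound would yield the constant $16$ instead of $8$, so this is the step requiring care.

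For the remaining two terms I use standard hypergeometric facts:
\[
\E[X^2]=\Var K=s\rho(1-\rho)\frac{n-s}{n-1}\leq s\rho(1-\rho),
\qquad
\Prob(K\geq1)\leq\E K=s\rho.
\]
Combining these with $c^2\leq(1-\rho)^2\leq 1-\rho$ gives
\[
\E\bigl[(X-c)^2\1_{\{K\geq1\}}\bigr]\leq 2s\rho(1-\rho).
\]
Finally, for $s\geq2$ we have $(s-1)^2\geq s^2/4$. Dividing by $(s-1)^2$ therefore gives the bound $8\rho(1-\rho)/s$, which is \eqref{eq:hypergeometric}. The case $m=n$ is trivial and is consistent with this, since there $\rho=1$, $\alpha=1$, $K=s$, and both sides vanish.
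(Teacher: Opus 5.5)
Your proof is correct, and it takes a different and somewhat shorter route than the paper's.

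The paper splits according to whether $\mu=s\rho$ is at least $1$.
\begin{itemize}
\item When $s\rho\geq1$, it discards the indicator entirely and absorbs the squared bias $(1-\rho)^2\frac{(n-s)^2}{(n-1)^2}$ into $s\rho(1-\rho)$ using $s\rho\geq1$.
\item When $s\rho\leq1$, it bounds $\1_{\{K\geq1\}}\leq K$ for the whole squared quantity. It then evaluates $\E[K(K-a_*)^2]=s\rho\Var(K^{\mathrm{sb}})$ through the size-biased law $K^{\mathrm{sb}}\sim 1+\mathrm{Hypergeometric}(n-1,m-1,s-1)$, whose mean is exactly $a_*=1+(s-1)\alpha$.
\end{itemize}

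You instead center at $\E K$ and notice that the cross term equals $-2c\,s\rho\,\Prob(K=0)\leq0$. You then keep the indicator only on the deterministic bias term. There, Markov's bound $\Prob(K\geq1)\leq s\rho$ supplies precisely the factor $s\rho$ that the paper obtains from the hypothesis $s\rho\geq1$ in its first case. This handles both regimes uniformly, using only the first two moments of $K$ and no size-biasing.

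The paper's case split buys a slightly better constant in the sparse regime, namely $3\rho(1-\rho)/s$ when $s\rho\leq1$. It also makes transparent why $\alpha$ is the natural centering once a red vertex is known to lie in the block. Neither matters for the application in \eqref{eq:block-second-moment}, where only the order $\rho(1-\rho)/s$ is used.
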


\begin{proof}
Let $\mu=s\rho$ and $a_*=1+(s-1)\alpha$.
The standard first and second moments of a hypergeometric random variable yield
\begin{equation}\label{eq:hypergeometric-moments}
\Var(K)
=
s\rho(1-\rho)\frac{n-s}{n-1}
\end{equation}
and
\begin{equation}\label{eq:hypergeometric-bias}
|\E[K]-a_*|
=
(1-\rho)\frac{n-s}{n-1}.
\end{equation}

Suppose first that $\mu\geq1$. Combining \eqref{eq:hypergeometric-moments} and \eqref{eq:hypergeometric-bias}, and using $s\rho\geq1$, yields
\[
\E[(K-a_*)^2]
\leq
2s\rho(1-\rho).
\]
Since
\[
\frac{s}{(s-1)^2}\leq\frac4s
\] for $s\geq 2$,
dividing by $(s-1)^2$ yields \eqref{eq:hypergeometric} in this case.

Now suppose that $\mu\leq1$. Define the size-biased version $K^{\mathrm{sb}}$ of $K$ by
\[
\Prob(K^{\mathrm{sb}}=j)
=
\frac{j\Prob(K=j)}{\E[K]}.
\]
The variable $K^{\mathrm{sb}}$ has the same distribution as
\[
1+\operatorname{Hypergeometric}(n-1,m-1,s-1),
\]
and its mean is $a_*$. Since $\1_{\{K\geq1\}}\leq K$, we obtain that 
\[\E[(K-a_*)^2\1_{\{K\geq1\}}]
\leq\E[K(K-a_*)^2]
=\mu\Var(K^{\mathrm{sb}}).\]
Applying the hypergeometric variance formula to $K^{\mathrm{sb}}$, and using
\[
\alpha\leq\rho,
\qquad
1-\alpha=\frac{n(1-\rho)}{n-1},
\qquad
\frac{n-s}{n-2}\leq 1
\]
yields
\[
\mu\Var(K^{\mathrm{sb}})
\leq
\frac32\rho^2s(s-1)(1-\rho).
\]
After division by $(s-1)^2$, the resulting upper bound is at most
\[
3\rho^2(1-\rho)
\leq
\frac{3\rho(1-\rho)}s,
\]
where the last inequality uses the fact that $s\rho=\mu\leq1$. This proves \eqref{eq:hypergeometric}. The endpoint cases $m=1$ and $m=n$ also follow directly from the definition.
\end{proof}

\begin{proof}[Proof of \cref{prop:resolvent}]
The function $b$ is identically 0 for $m=1$ and for $m=n$, so assume $2\leq m\leq n-1$. Set
\[
\rho=\frac mn,
\qquad
\alpha=\frac{m-1}{n-1},
\qquad
\eta_z=\1_{\{z\in R\}},
\qquad
D_z=\1_{\{X=Y=z\}}.
\]
Then \eqref{eq:source} can be rewritten as
\begin{equation}\label{eq:edge-source}
b
=
\sum_{\{i,j\}\in E(\mathcal C_n)}
\left[D_i(\eta_j-\alpha)+D_j(\eta_i-\alpha)\right].
\end{equation}

A \dfn{matching} in $\mathcal C_n$ is a set of edges of $\mathcal C_n$, no two of which share a vertex. The edge set of $\mathcal C_n$ can be partitioned into three matchings. Fix one matching $M$ from such a partition, and define
\begin{equation}\label{eq:matching-source}
g_M
=
\sum_{\{i,j\}\in M}
\left[D_i(\eta_j-\alpha)+D_j(\eta_i-\alpha)\right].
\end{equation}
Since $b$ is the sum of three functions of the form $g_M$, the triangle inequality for the norm $\|\cdot\|_{-1}$ shows that it is enough to prove the inequality 
\begin{equation}\label{eq:matching-target}
\|g_M\|_{-1}^2
\leq
K_4\frac{1-\rho}{m}(1+\log n)^2
\end{equation}
with an absolute constant $K_4$ independent of $M$.

Choose a cycle edge $e_\star\notin M$, and delete it. The remaining graph is a path containing every edge of $M$. Along this path, regard the two endpoints of each edge of $M$ as a 2-vertex atom, and regard every vertex not incident to an edge of $M$ as a 1-vertex atom. These atoms form a partition of the path into consecutive blocks.

We construct a rooted binary tree whose nodes are intervals of this path. The root is the entire path. If a node contains more than one atom, split it at an edge at the boundary of an atom whose distance from the midpoint of the node is minimal; its two children are the intervals on the two sides of that boundary edge. Continue until each leaf is a single atom. (See \cref{fig:tree}.) No edge of $M$ is cut by any split. Each child contains at least one quarter as many vertices as its parent, so the number of levels is at most $K_5(1+\log n)$ for an absolute constant $K_5$. Nodes on the same level are pairwise disjoint intervals.

\begin{figure}[ht]
\begin{center}{\includegraphics[width=\linewidth]{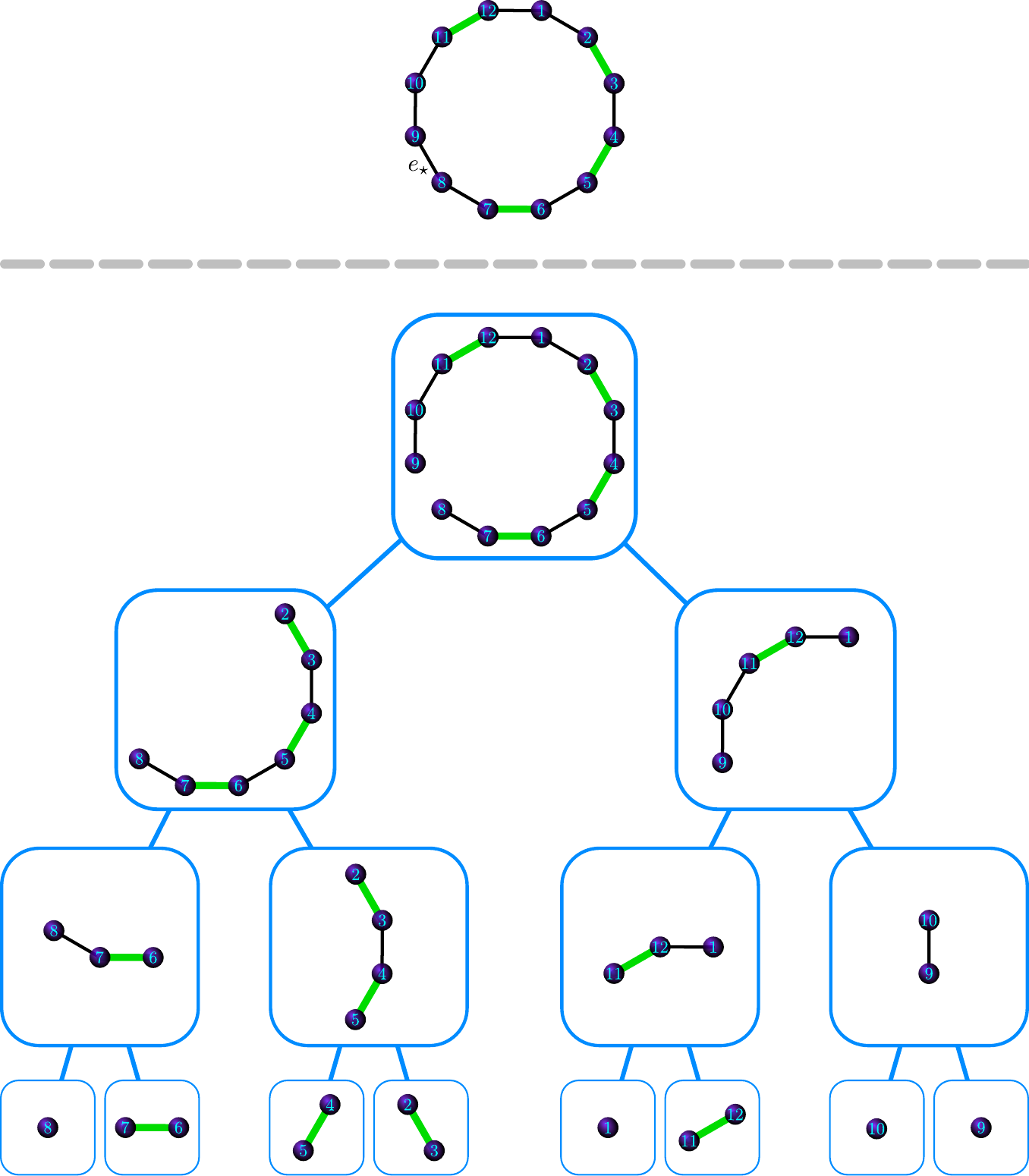}}
\end{center}
\caption{On the top is the cycle $\mathcal C_{12}$ with a specified edge $e_\star$ and a matching $M$ consisting of thick green edges. On the bottom is a binary tree obtained by repeatedly cutting edges not in $M$. }   
\label{fig:tree}
\end{figure} 

Fix a node $B$ of this tree. Let
\[
s=|B|,
\qquad
K_B=|R\cap B|,
\qquad
c_B=2|\{e\in M:e\subseteq B\}|.
\]
Thus, $c_B$ is the number of oriented summands in \eqref{eq:matching-source} contributed by matching edges contained in $B$. Let $g_{M,B}$ denote the sum of those summands.

Define $\mathcal F_B$ to be the sigma-algebra generated by $R\cap B^c$ together with the following information about each auxiliary coordinate: if the coordinate lies outside $B$, record its exact position; if it lies in $B$, record only that fact. The value of $K_B$ is $\mathcal F_B$-measurable because $|R|=m$. Let
\[
A_B=\{X\in B,\ Y\in B\}.
\]
The event $A_B$ is also $\mathcal F_B$-measurable. Conditional on $\mathcal F_B$ and on $A_B$, if $K_B=k\geq1$, the set $R\cap B$ is uniformly distributed among the $k$-element subsets of $B$, and conditional on $R\cap B$, the coordinates $X$ and $Y$ are independent and uniform on $R\cap B$. Thus, after identifying the interval $B$ with the path $[s]$, the conditional law of the internal state is the uniform measure $\nu_{s,k}$ from \cref{sec:local-gap}.

Define $F_B=\E_{\varpi_{n,m}}[g_{M,B}\mid\mathcal F_B]$.
If $s\geq 2$, then
\begin{equation}\label{eq:block-mean}
F_B
=
\1_{A_B}\frac{c_B}{sK_B}
\left(\frac{K_B-1}{s-1}-\alpha\right),
\end{equation}
where the right-hand side is defined to be $0$ when $K_B=0$. For a singleton node, set $F_B=0$.

To verify \eqref{eq:block-mean}, condition on $\mathcal F_B$, $A_B$, and $K_B=k\geq1$. For distinct vertices $i,j\in B$, the conditional uniformity described above yields the identities
\[
\E[D_i\mid\mathcal F_B,A_B]
=
\frac1{sk}
\quad\text{and}\quad \E[D_i\eta_j\mid\mathcal F_B,A_B]
=
\frac{k-1}{s(s-1)k}.
\]
There are $c_B$ oriented summands in $g_{M,B}$, so summing these conditional expectations yields \eqref{eq:block-mean}. If $A_B$ does not occur, then $g_{M,B}=0$, which is also consistent with \eqref{eq:block-mean}.

At the root $B=\ZZ/n\ZZ$, we have $K_B=m$, so $(K_B-1)/(n-1)=\alpha$, which implies that $F_B=0$. If an internal node $B$ has children $B_1$ and $B_2$, define
\begin{equation}\label{eq:UB-definition}
U_B=F_{B_1}+F_{B_2}-F_B.
\end{equation}
Because no matching edge is cut by the split, we know that 
\[
g_{M,B}=g_{M,B_1}+g_{M,B_2}.
\]
Moreover, $\mathcal F_B\subseteq\mathcal F_{B_i}$ for $i=1,2$. The tower property therefore tells us that $\E[U_B\mid\mathcal F_B]=0$. Recursively expanding $g_{M,B}-F_B$ from the root to the leaves yields the identity
\begin{equation}\label{eq:tree-telescope}
g_M
=
\sum_{B\text{ a leaf}}(g_{M,B}-F_B)
+
\sum_{B\text{ internal}}U_B.
\end{equation}
Each summand in \eqref{eq:tree-telescope} has $\varpi_{n,m}$-mean $0$.

We first bound the second moment of $F_B$. Under $\varpi_{n,m}$, conditional on $R$, the coordinates $X$ and $Y$ are independent and uniform on $R$. Hence,
\[
\Prob(A_B\mid R)=\frac{K_B^2}{m^2}.
\]
Squaring \eqref{eq:block-mean} and then taking conditional expectation given $R$ cancels the factor $K_B^{-2}$. Since $R$ is a uniformly random $m$-element subset under $\varpi_{n,m}$, the variable $K_B$ has the hypergeometric distribution with parameters $(n,m,s)$. Therefore,
\begin{equation}\label{eq:block-second-moment}
\begin{split}
\E_{\varpi_{n,m}}[F_B^2]
&=
\frac{(c_B/s)^2}{m^2}
\E\left[
\left(\frac{K_B-1}{s-1}-\alpha\right)^2
\1_{\{K_B\geq1\}}
\right]\\
&\leq
\frac{K_6\rho(1-\rho)}{m^2s},
\end{split}
\end{equation}
where we used the inequality $c_B\leq s$ and \cref{lem:hypergeometric}. Here and below, $K_6,K_7,\ldots$ denote absolute positive constants.

We next define the Dirichlet form localized to $B$. Let $E_B$ be the set of edges of the cut-open path with both endpoints in $B$. In the full Dirichlet form $\mathcal D_{n,m}(f)$, retain only the transition terms generated by spatial edges in $E_B$; denote their sum by $\mathcal D_B(f)$. The deleted edge $e_\star$ is not included, even when $B$ is the root. If we condition on $\mathcal F_B$, on $A_B$, and on $K_B=k\geq1$, then the transitions generated by $E_B$ are exactly the transitions of the path two-copy process on $s$ vertices with $k$ red vertices. Therefore, \cref{lem:local-gap} applies to the conditional internal state.

The random variable $U_B$ vanishes outside $A_B$ and satisfies $\E[U_B\mid\mathcal F_B]=0$. Consequently,
\begin{align}
|\langle f,U_B\rangle_{\varpi_{n,m}}|
&=
\left|
\E_{\varpi_{n,m}}
\left[
\bigl(f-\E[f\mid\mathcal F_B]\bigr)U_B
\right]
\right|\notag\\
&\leq
\bigl(\E_{\varpi_{n,m}}[U_B^2]\bigr)^{1/2}
\bigl(\E_{\varpi_{n,m}}[\1_{A_B}\Var(f\mid\mathcal F_B)]\bigr)^{1/2}.
\label{eq:block-cs}
\end{align}
Applying \cref{lem:local-gap} on every conditional fiber for which $A_B$ occurs, and then averaging over the fibers, yields
\[
\E_{\varpi_{n,m}}[\1_{A_B}\Var(f\mid\mathcal F_B)]
\leq
K_1|B|^2\mathcal D_B(f).
\]
Substituting this inequality into \eqref{eq:block-cs} yields
\begin{equation}\label{eq:single-block-dual}
|\langle f,U_B\rangle_{\varpi_{n,m}}|
\leq
\left(K_1|B|^2\E_{\varpi_{n,m}}[U_B^2]\right)^{1/2}
\mathcal D_B(f)^{1/2}.
\end{equation}

Let $\mathcal B$ be a collection of pairwise disjoint internal nodes. Their internal path-edge sets are pairwise disjoint, so
\[
\sum_{B\in\mathcal B}\mathcal D_B(f)
\leq
\mathcal D_{n,m}(f).
\]
Summing \eqref{eq:single-block-dual} over $B\in\mathcal B$ and applying the Cauchy--Schwarz inequality yields
\begin{equation}\label{eq:block-dual}
\left|
\left\langle f,\sum_{B\in\mathcal B}U_B\right\rangle_{\varpi_{n,m}}
\right|
\leq
\left(
K_1\sum_{B\in\mathcal B}|B|^2\E_{\varpi_{n,m}}[U_B^2]
\right)^{1/2}
\mathcal D_{n,m}(f)^{1/2}.
\end{equation}

If $B$ has children $B_1,B_2$, then the tower property yields
\[
F_B
=
\E[F_{B_1}+F_{B_2}\mid\mathcal F_B].
\]
Hence, $U_B=(F_{B_1}+F_{B_2})-\E[F_{B_1}+F_{B_2}\mid\mathcal F_B]$. Conditional expectation minimizes mean squared error among $\mathcal F_B$-measurable functions, so
\[
\E[U_B^2]
\leq
\E[(F_{B_1}+F_{B_2})^2].
\]
The product $F_{B_1}F_{B_2}$ is identically 0 because the event defining a nonzero value of either variable requires both auxiliary coordinates to lie in the corresponding child, and the two children are disjoint. Using \eqref{eq:block-second-moment} and the fact that each child has at least $|B|/4$ vertices yields
\begin{equation}\label{eq:UB-second-moment}
\E_{\varpi_{n,m}}[U_B^2]
\leq
\frac{K_7\rho(1-\rho)}{m^2|B|}.
\end{equation}

Fix one level of the binary tree, and let $\mathcal B$ be the internal nodes on that level. The nodes are pairwise disjoint and satisfy
\[
\sum_{B\in\mathcal B}|B|\leq n.
\]
Combining \eqref{eq:block-dual} and \eqref{eq:UB-second-moment} yields that 
\[
\left|
\left\langle f,\sum_{B\in\mathcal B}U_B\right\rangle_{\varpi_{n,m}}
\right|
\leq
\left(K_8\frac{n\rho(1-\rho)}{m^2}\right)^{1/2}
\mathcal D_{n,m}(f)^{1/2}.
\]
Since $n\rho=m$, the coefficient inside the square root equals $K_8(1-\rho)/m$. The variational identity \eqref{eq:variational} therefore implies that 
\begin{equation}\label{eq:1-scale}
\left\|
\sum_{B\in\mathcal B}U_B
\right\|_{-1}^2
\leq
K_8\frac{1-\rho}{m}.
\end{equation}
Here we used the fact, which is a direct consequence of \eqref{eq:variational}, that if a mean-0 function $g$ satisfies
\[
|\langle f,g\rangle_{\varpi_{n,m}}|
\leq
A^{1/2}\mathcal D_{n,m}(f)^{1/2}
\]
for every $f$, then $\|g\|_{-1}^2\leq A$.

It remains to bound the leaf terms in \eqref{eq:tree-telescope}. A 1-vertex leaf contributes $0$. A two-vertex leaf is an edge $\{i,j\}\in M$, and its contribution before centering is
\[
D_i(\eta_j-\alpha)+D_j(\eta_i-\alpha).
\]
Since the events $D_i=1$ and $D_j=1$ are disjoint, and conditional on $D_i=1$ the variable $\eta_j$ is Bernoulli with parameter $\alpha$, we obtain the identity
\begin{equation}\label{eq:leaf-second-moment}
\E_{\varpi_{n,m}}
\left[
\bigl(D_i(\eta_j-\alpha)+D_j(\eta_i-\alpha)\bigr)^2
\right]
=
\frac{2\alpha(1-\alpha)}{nm}.
\end{equation}
Subtracting a conditional expectation cannot increase the second moment. The matching $M$ has at most $n/2$ edges, and
\[
1-\alpha=\frac{n-m}{n-1}\leq\frac32(1-\rho)
\]
for $n\geq3$. Applying the size-two instance of \cref{lem:local-gap} to the disjoint leaf blocks, exactly as in the derivation of \eqref{eq:block-dual}, and then using \eqref{eq:variational}, yields the inequality 
\begin{equation}\label{eq:leaf-hminusone}
\left\|
\sum_{B\text{ a leaf}}(g_{M,B}-F_B)
\right\|_{-1}^2
\leq
K_9\frac{1-\rho}{m}.
\end{equation}

There are at most $K_5(1+\log n)$ levels. Applying the triangle inequality for $\|\cdot\|_{-1}$ to \eqref{eq:tree-telescope}, then applying \eqref{eq:1-scale} to each level and \eqref{eq:leaf-hminusone} to the leaf contribution, yields the inequality 
\[
\|g_M\|_{-1}
\leq
K_{10}(1+\log n)\sqrt{\frac{1-\rho}{m}}.
\]
Squaring proves \eqref{eq:matching-target}. Finally, $b$ is the sum of the contributions from three matchings, so one more application of the triangle inequality for $\|\cdot\|_{-1}$ proves \eqref{eq:resolvent}.
\end{proof}

\section{Proof of Cutoff}\label{sec:finish}

We now combine the entropy comparison from \cref{sec:exposure}, the integrated identity from \cref{sec:replica}, and the multiscale estimate from \cref{sec:blocks}.

\begin{lemma}\label[lemma]{lem:total-error}
There is an absolute constant $K_{11}>0$ such that
\begin{equation}\label{eq:total-error}
\sum_{m=1}^n m\int_0^\infty\mathcal E_{n,m}(t)\,\mathrm{d}t
\leq
K_{11}n^2(1+\log n)^3.
\end{equation}
\end{lemma}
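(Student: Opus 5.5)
The plan is to substitute the exact identity of \cref{prop:integrated} into the sum and then bound each of the three resulting terms using \cref{prop:resolvent} and \cref{lem:heat}. Multiplying \eqref{eq:integrated-error} by $m$ gives
\[
m\int_0^\infty\mathcal E_{n,m}(t)\,\mathrm{d}t
=
-\frac{(n+1)(n-m)}{24m}
+\frac{(n-1)^2}{4}\mathscr R_{n,m}
+\frac{n^2}{m}\,\frac{n-m}{n-1}\,B_n .
\]
The first term is nonpositive for every $1\leq m\leq n$, so we may simply drop it; an upper bound is all we need.

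For the second term, \cref{prop:resolvent} gives
\[
\frac{(n-1)^2}{4}\mathscr R_{n,m}\leq \frac{K_3 n^2}{4}\,\frac{1-m/n}{m}\,(1+\log n)^2\leq \frac{K_3n^2}{4m}(1+\log n)^2 .
\]
For the third term, we have $(n-m)/(n-1)\leq 1$, and \eqref{eq:B} gives $0\leq B_n\leq c_8(1+\log n)$. Hence that term is at most $c_8 n^2(1+\log n)/m$.

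Summing over $m$ from $1$ to $n$ turns the factor $1/m$ into the harmonic sum $\sum_{m=1}^n 1/m\leq 1+\log n$. The second term therefore contributes at most $(K_3/4)n^2(1+\log n)^3$, and the third at most $c_8n^2(1+\log n)^2\leq c_8n^2(1+\log n)^3$. Taking $K_{11}=K_3/4+c_8$ proves \eqref{eq:total-error}.

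The only point needing care is that \cref{prop:integrated} really applies for every $m$, including the degenerate cases. For $m=1$ and $m=n$ we have $\mathscr R_{n,m}=0$ because $b\equiv0$, and when $m=n$ the factor $n-m$ also kills the $B_n$ term. So no separate argument is required, and there is no genuine obstacle. All the substantive work sits in the multiscale estimate \eqref{eq:resolvent}, whose factor $1/m$ is exactly what makes the sum over $m$ produce only one extra logarithm.
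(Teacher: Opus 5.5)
Your proposal is correct and matches the paper's proof: multiply \eqref{eq:integrated-error} by $m$, drop the nonpositive first term, and bound the $\mathscr R_{n,m}$ and $B_n$ terms using \cref{prop:resolvent} and \eqref{eq:B}. In both terms the sum over $m$ is a harmonic sum, which costs one factor of $1+\log n$. Your explicit choice $K_{11}=K_3/4+c_8$ and your remark about the degenerate cases $m=1$ and $m=n$ are both fine.
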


\begin{proof}
Multiply \eqref{eq:integrated-error} by $m$ and sum over $m\in[n]$. The first term on the right-hand side of \eqref{eq:integrated-error} is nonpositive, so discarding it yields an upper bound.

For the terms involving $\mathscr R_{n,m}$, \cref{prop:resolvent} yields
\begin{align*}
\sum_{m=1}^n\frac{(n-1)^2}{4}\mathscr R_{n,m}
&\leq
K_{12}n^2(1+\log n)^2
\sum_{m=1}^n\frac{1-m/n}{m}\\
&\leq
K_{13}n^2(1+\log n)^3.
\end{align*}

For the terms involving $B_n$, we have
\begin{align*}
\sum_{m=1}^n
m\left(\frac nm\right)^2\frac{n-m}{n-1}B_n
&\leq
n^2B_n\sum_{m=1}^n\frac1m\\
&\leq
K_{14}n^2(1+\log n)^2,
\end{align*}
where the last inequality follows from \eqref{eq:B}. Combining the two estimates proves \eqref{eq:total-error}.
\end{proof}

\subsection{The Upper Bound}

We first prove the upper bound in \cref{thm:main}. The estimate in \cref{lem:total-error} is integrated over time. If the same Markov transition kernel is applied to two probability measures, their relative entropy cannot increase. Since $\pi_n$ is stationary, the function $t\mapsto H(\mu_t\mid\pi_n)$ is therefore nonincreasing. We use this fact to convert a time-averaged entropy estimate into a pointwise estimate.

\begin{proof}[Proof of the upper bound in \cref{thm:main}]
For all sufficiently large $n$, every $t$ in the interval
\[
J_n=\left[t_n,t_n+\frac1{\lambda_n}\right]
\]
satisfies $t\geq a_0/\lambda_n$. Hence, \cref{lem:heat} and the identity $e^{-2\lambda_nt_n}=1/n$ yield
\[
nh(t)
\leq
K_{15}ne^{-2\lambda_nt}
\leq
K_{15}
\]
for $t\in J_n$. Integrate \eqref{eq:entropy-error} over $J_n$. Since $\mathcal E_{n,m}(t)\geq0$, replacing the integral over $J_n$ by the integral over $[0,\infty)$ only increases the error term. Using \cref{lem:total-error} and the inequality
\[
\lambda_nn^2
=4n^2\sin^2(\pi/n)
\leq4\pi^2,
\]
we obtain
\begin{equation}\label{eq:averaged-entropy}
\lambda_n\int_{J_n}H(\mu_t\mid\pi_n)\,\mathrm{d}t
\leq
K_{16}(1+\log n)^3.
\end{equation}

Set
\[
T_n=t_n+\frac1{\lambda_n}.
\]
The function $t\mapsto H(\mu_t\mid\pi_n)$ is nonincreasing. Since $J_n$ has length $1/\lambda_n$, \eqref{eq:averaged-entropy} yields the inequality
\begin{equation}\label{eq:entropy-at-T}
H(\mu_{T_n}\mid\pi_n)
\leq
K_{16}(1+\log n)^3.
\end{equation}

Apply the estimate \eqref{eq:entropy-decay} starting at time $T_n$. For every $u\geq 0$,
\begin{equation}\label{eq:post-T-decay}
H\left(\mu_{T_n+u/\lambda_n}\mid\pi_n\right)
\leq
K_{16}(1+\log n)^3e^{-c_5u},
\end{equation}
where $c_5>0$ is the absolute constant in \eqref{eq:entropy-decay}. Choose
$u=K_{17}\log\log n+K_\varepsilon$,
where $K_{17}$ is a sufficiently large absolute constant and $K_\varepsilon$ is sufficiently large depending only on $\varepsilon$. Then \eqref{eq:post-T-decay} and Pinsker's inequality imply that 
\[
d_n\left(T_n+\frac{u}{\lambda_n}\right)\leq\varepsilon
\]
for all sufficiently large $n$. Since $\lambda_n^{-1}\asymp n^2$, there is an absolute constant $K_{18}>0$ such that
\[
t_{\mathrm{mix}}^{(n)}(\varepsilon)
\leq
t_n+K_{18}n^2\log\log n+K'_{\varepsilon}n^2
\]
for a constant $K'_{\varepsilon}>0$ depending only on $\varepsilon$. This is the upper bound in \cref{thm:main} after renaming the constants appearing there.
\end{proof}

\subsection{The Lower Bound}

We prove the lower bound using the statistic $F$ defined in \cref{sec:intro}. The argument is the eigenfunction method used by Wilson \cite{Wilson}.

\begin{proof}[Proof of the lower bound in \cref{thm:main}]
Recall that
\[
F(\sigma)
=
\sum_{i\in\ZZ/n\ZZ}
\cos\left(\frac{2\pi(\sigma(i)-i)}n\right).
\]
For a fixed label $i$, the function
\[
\sigma\longmapsto
\cos\left(\frac{2\pi(\sigma(i)-i)}n\right)
\]
is a first Fourier mode of the position of card $i$. Therefore,
\begin{equation}\label{eq:F-eigenfunction}
\mathcal L_nF=-\lambda_nF.
\end{equation}
Since $F(\id)=n$, the function $m(t)=\E_{\mu_t}[F]$ satisfies $m'(t)=-\lambda_nm(t)$ and $m(0)=n$. Solving this differential equation yields the identity
\begin{equation}\label{eq:F-mean}
\E_{\mu_t}[F]
=
ne^{-\lambda_nt}.
\end{equation}
Symmetry of the uniform measure yields $\E_{\pi_n}[F]=0$.

If one adjacent transposition occurs, exactly two cards move by one cyclic step. Since the function $\theta\mapsto\cos\theta$ is $1$-Lipschitz and one cyclic step changes the angle by $2\pi/n$, we have
\[
|F(\tau_x\circ\sigma)-F(\sigma)|
\leq
\frac{4\pi}{n}
\]
for all $x$ and $\sigma$. Consequently,
\begin{equation}\label{eq:F-carré}
\sum_{x\in\ZZ/n\ZZ}
\bigl(F(\tau_x\circ\sigma)-F(\sigma)\bigr)^2
\leq
\frac{16\pi^2}{n}.
\end{equation}

For any function $G$ on the state space of a continuous-time Markov chain,
\[
\mathcal L_n(G^2)-2G\mathcal L_nG
=
\sum_x\bigl(G(\tau_x\circ\sigma)-G(\sigma)\bigr)^2.
\]
Applying this identity with $G=F$ and using \eqref{eq:F-eigenfunction}, we find that 
\begin{equation}\label{eq:F-variance-derivative}
\frac{\mathrm{d}}{\mathrm{d}t}\Var_{\mu_t}(F)
=
-2\lambda_n\Var_{\mu_t}(F)
+
\E_{\mu_t}\left[
\sum_x\bigl(F(\tau_x\circ\sigma)-F(\sigma)\bigr)^2
\right].
\end{equation}
Because $\Var_{\mu_0}(F)=0$, the differential inequality obtained from \eqref{eq:F-carré} and \eqref{eq:F-variance-derivative} yields
\begin{equation}\label{eq:F-variance-bound}
\Var_{\mu_t}(F)\leq K_{19}n
\end{equation}
for every $t\geq0$, where we used the fact that $\lambda_n\asymp n^{-2}$. 

For completeness, we compute the variance under stationarity exactly. Define
\[
a_{i,x}=\cos\left(\frac{2\pi(x-i)}n\right).
\]
Every row sum and every column sum of the matrix $(a_{i,x})$ is $0$, and
\[
\sum_{i,x}a_{i,x}^2=\frac{n^2}{2}.
\]
For a uniform random permutation $\sigma$, we can expand
\[
F(\sigma)^2
=
\left(\sum_i a_{i,\sigma(i)}\right)^2
\]
and use the identities 
\[
\Prob(\sigma(i)=x)=\frac{1}{n}\quad\text{and}\quad\Prob(\sigma(i)=x,\sigma(j)=y)=\frac1{n(n-1)}
\]
for $i\neq j$ and $x\neq y$. Using the zero row and column sums, we compute that 
\begin{equation}\label{eq:F-stationary-variance}
\Var_{\pi_n}(F)
=
\frac1{n-1}\sum_{i,x}a_{i,x}^2
=
\frac{n^2}{2(n-1)}.
\end{equation}

Fix $s\geq0$ and set
$t=t_n-s/\lambda_n$.
For each fixed $s$, this time is nonnegative for all sufficiently large $n$. By \eqref{eq:F-mean} and the definition of $t_n$, we have $\E_{\mu_t}[F]
=
\sqrt n\,e^s$.
Let
\[
A_s=\left\{\sigma:F(\sigma)\geq\frac12\sqrt n\,e^s\right\}.
\]
Chebyshev's inequality and \eqref{eq:F-variance-bound} imply that 
$\mu_t(A_s)
\geq
1-K_{20}e^{-2s}$,
while Chebyshev's inequality and \eqref{eq:F-stationary-variance} imply that 
$\pi_n(A_s)
\leq
K_{21}e^{-2s}$.
Hence, 
\begin{equation}\label{eq:lower-tv}
d_n\left(t_n-\frac{s}{\lambda_n}\right)
\geq
1-K_{22}e^{-2s}.
\end{equation}

Given $\varepsilon\in(0,1)$, choose $s_\varepsilon>0$ so that the right-hand side of \eqref{eq:lower-tv} is greater than $\varepsilon$. Since $\lambda_n^{-1}\asymp n^2$, there is a constant $K''_\varepsilon>0$, depending only on $\varepsilon$, such that
\[
t_{\mathrm{mix}}^{(n)}(\varepsilon)
\geq
t_n-K''_\varepsilon n^2
\]
for all sufficiently large $n$. This is the lower bound in \cref{thm:main}.

Finally, we have 
\[
\lambda_n\sim\frac{4\pi^2}{n^2}
\qquad\text{and}\qquad
\log\log n=o(\log n).
\]
Combining the upper and lower bounds therefore yields the fact that 
\[
t_{\mathrm{mix}}^{(n)}(\varepsilon)
\sim
\frac{n^2\log n}{8\pi^2}
\]
for every fixed $\varepsilon\in(0,1)$, which proves the cutoff assertion in \cref{thm:main}.
\end{proof}

\section{Further Directions}\label{sec:future}

\subsection{The Width of the Transition and a Limiting Profile}

The bounds in \cref{thm:main} tell us that
\[
t_n-C_\varepsilon n^2
\leq
t_{\mathrm{mix}}^{(n)}(\varepsilon)
\leq
t_n+Cn^2\log\log n+C_\varepsilon n^2
\]
for every fixed $\varepsilon\in(0,1)$. 

\begin{question}\label[question]{ques:window}
Does
\[
t_{\mathrm{mix}}^{(n)}(\varepsilon)=t_n+O_\varepsilon(n^2)
\]
hold for every fixed $\varepsilon\in(0,1)$? For every fixed $s\in\RR$, does the limit
\[
\lim_{n\to\infty}d_n(t_n+sn^2)
\]
exist? If it exists for all $s$, what is the resulting function of $s$?
\end{question}

We note that Lacoin \cite{LacoinProfile} determined an analogous limiting total-variation profile for the simple exclusion process on the cycle.   

\subsection{Other Graphs}

The random-exposure argument from \cref{sec:exposure}, the two-copy
construction from \cref{sec:replica}, and the second-moment identity
used in the proof of \cref{lem:error-identity} all have analogues on
arbitrary finite connected graphs. However, extending the proof of cutoff requires further work. \cref{prop:integrated} relies on the cyclic separation $X-Y$ and the explicit function in \eqref{eq:corrector}, while the estimate in \cref{prop:resolvent} uses a decomposition into nested intervals and the inequality in \cref{lem:local-gap}. It would be interesting to identify other graph classes for which analogues of these two estimates can be obtained. 

\section{Appendix: A Formal Certificate in Lean}\label{sec:formalization}

The proofs in this paper were generated through human-AI collaboration. In
dialogue with AI, the author developed and formalized \cref{thm:main} with
AxiomCode, an AI system currently under development by Axiom Math. In
particular, this resulted in a formal Lean certificate for \cref{thm:main} built on the
Mathlib library. The formalization does not assume any results from the
literature beyond those already verified in Mathlib. In particular, the
logarithmic Sobolev inequality of Lee and Yau \cite{LeeYau} used in the proof
of \cref{lem:lsi}, as well as Pinsker's inequality, are proved within the
formalization. See
\begin{center}
\url{https://github.com/AxiomMath/CycleCutoff}
\end{center}
for explicit details. This repository includes a formal challenge file containing the statement of
\cref{thm:main}, which can be mechanically verified using the Comparator tool in
Lean.

\end{document}